\newcommand*\diff{\mathop{}\!\mathrm{d}}
\DeclareMathOperator{\diag}{diag}
\newtheorem{lemma}{Lemma}[section]
\newtheorem{cor}[lemma]{Corollary}
\newtheorem{theo}[lemma]{Theorem}
\newtheorem{rem}[lemma]{Remark}
\newtheorem{prop}[lemma]{Proposition}
\def\:{\thinspace:\thinspace}
\newcommand{\R}{\mathbb{R}}
\newcommand{\C}{\mathbb{C}}
\newcommand{\Z}{\mathbb{Z}}
\DeclareMathOperator{\esssup}{ess\;sup}
\DeclareMathOperator{\sgn}{sgn}
\DeclareMathOperator{\Ker}{ker}
\numberwithin{equation}{section}
\begin{document}
\title[Analytic Solutions for Stochastic Hybrid Models]{Analytic Solutions for Stochastic Hybrid Models of Gene Regulatory Networks }

\keywords{Petri networks, Systems of PDEs, Positive $C_0$-semigroups, {Piecewise deterministic Markov processes}}
\subjclass[2010]{35B09, 47D06, 93C20, 35F46}

\author{Pavel Kurasov}
\address{Pavel Kurasov, Dept. of Mathematics, Stockholm Univ., 106 91 Stockholm, SWEDEN}
\email{kurasov@math.su.se}

\author{Delio Mugnolo}
\address{Delio Mugnolo, Chair of Analysis, University of Hagen, 58084 Hagen, GERMANY}
\email[corresponding author]{delio.mugnolo@fernuni-hagen.de}

\author{Verena Wolf}
\address{Verena Wolf, Computer Science Department, Saarland University, 66123 Saarbrücken, GERMANY}
\email{verena.wolf@uni-saarland.de}

\begin{abstract}
Discrete-state stochastic models are a popular approach to describe the inherent stochasticity of gene expression in single cells. The analysis of such models is hindered by the fact that the underlying discrete state space is extremely large.  Therefore hybrid models, in which protein counts are replaced by average protein concentrations, have become  a popular alternative.

The evolution of the corresponding probability density functions is given by a coupled system of hyperbolic PDEs. This system has Markovian nature but its hyperbolic structure makes it difficult to apply standard functional analytical methods. We are able to prove convergence towards the stationary solution and determine such equilibrium explicitly by combining abstract methods from the theory of positive operators and elementary ideas from potential analysis.
\end{abstract}

\maketitle

\section{Introduction}
Very small copy numbers of genes, RNA, or protein molecules in single cells give rise to heterogeneity across genetically identical cells~\cite{raj2008nature}. 
During the last decades discrete-state stochastic models  
have become a popular approach to  describe gene expression in single cells since they
 adequately account for discrete random events underlying such cellular processes (see \cite{Schnoerr17} for a review).
 Exact solutions for such models are available if they obey 
 detailed balance \cite{Laurenzi2000} or restrict to monomolecular intracellular interactions \cite{jahnke}. 
 However, since gene regulatory networks typically contain feedback loops,
  second-order  interactions are necessary for an adequate description.
 Moreover, neither   detailed balance   nor linear dynamics are realistic assumptions
   even for simple regulatory networks.
 Recently,   analytical solutions for single-gene feedback loops have been presented~\cite{GriSchNew12,hornos2005self,kumar2014exact,liu2016decomposition,vandecan2013self,visco2008exact}.

The underlying   state space of discrete-stochastic models 
that describe gene regulatory networks  
is typically extremely large due to the combinatorial nature of
molecule counts for different types of chemical species. Moreover,
realistic upper bounds on protein counts are often not known. 
Therefore, hybrid models have become popular in which for highly-abundant species only 
average  counts are tracked while discrete random variables
are  used to represent species with low copy numbers.
These hybrid approaches allow for faster and yet accurate 
Monte-Carlo sampling that
stochastically selects counts of species with low copy numbers
and numerically integrates average counts of all other species
\cite{hepp2015adaptive,crudu2009hybrid,Herajy2012,puchalka2004bridging,lin2018stochastic,bokes2013transcriptional}. 
{An application of this mathematical formalism to models in mathematical ecology can be found in~\cite{Costa2016}.}

Hybrid or fluid approaches have also been investigated in the 
context of stochastic Petri nets as a kind of mean field approximation, thus giving rise to fluid stochastic Petri nets~\cite{TriKul93,HorKulNic98}. {The underlying stochastic process of this formalism is a piecewise deterministic Markov processes (PDMP), as introduced by Davis in~\cite{Davis1984}. However, it seems that this connection was rarely investigated in the literature.}
Lück and the present authors have shown in~\cite{KurLucMug18} how this formalism can be successfully adapted to study gene regulatory networks. More precisely, a stochastic hybrid approach for gene regulatory networks has been proposed therein, in which the state of the genes is represented by a discrete-stochastic variable, while the evolution of the protein numbers is modeled by an ordinary differential equation. The main aim of~\cite{KurLucMug18} was to show  that {this class of 
PDMPs  allows} for more efficient numerical simulations than common, purely discrete master equations, while providing solutions that are as accurate as those provided in reference studies, like~\cite{GriSchNew12}. The scope of the present paper is more analytic: in particular, we discuss in details properties of the evolution equation  -- in fact, a coupled system of hyperbolic PDEs -- that {describes the 
protein production of an autocatalytic gene. 
Different models for this common gene regulatory motif  have been proposed
with some focusing on protein production that happens in bursts \cite{bokes2018high,chen2020limit}, leading to systems with discontinuous trajectories.
Other models describe switching between low and high production by   Hill functions \cite{lin2016gene,lin2016bursting} or rely on a Poisson representation \cite{herbach2019stochastic,lin2019exact}.
 }

{The model that we consider here is a PDMP with a single continuous
	variable for the protein concentration and two modes of production. 
The corresponding conditional densities are } mathematically described by the system of first-order partial differential equations
\begin{equation}\label{eq:system}
\frac{\partial}{\partial t}
\begin{pmatrix} f_1(x,t)\\ f_2(x,t)\end{pmatrix} = -
\frac{\partial}{\partial x} \left[
\begin{pmatrix}
a-bx & 0 \\
0 & c-dx \end{pmatrix}
\begin{pmatrix} f_1(x,t)\\ f_2(x,t)\end{pmatrix} \right]+ \begin{pmatrix}
- \lambda (x) & \mu (x) \\
\lambda (x) & - \mu (x) \end{pmatrix}
\begin{pmatrix} f_1(x,t)\\ f_2(x,t)\end{pmatrix} .
\end{equation}
The basic setting is outlined in Section~\ref{sec:formulation}.

{Due to the terms $a-bx,c-dx$ in~\eqref{eq:system}, t}his system has to be complemented by suitable boundary and initial conditions: {this subtlety is often omitted in papers dealing with similar models. Correct boundary conditions for models rather similar to ours (and some generalizations thereof) have been already derived in~\cite{faggionato2009non,zeiser2010autocatalytic}.}
We will show that {already} the requirement of the solution to be a probability distribution -- and in particular a positive-valued $L^1$-function -- determines a specific choice of boundary conditions. Furthermore, we are interested in calculating analytically the equilibrium  distribution. These distributions can be computed numerically in a few special cases but in order 
to understand their properties it is useful to have explicit formulas for the solutions: this is done in Section~\ref{sec:balance-1d}. Results that are comparable to ours have been obtained in the literature: we especially mention~\cite{faggionato2009non,zeiser2010autocatalytic}.
{{On one hand, more general models have been studied in~\cite{faggionato2009non}, but the
explicit character of our model allows us to clarify the role of boundary conditions which are (in our case) derivable from the equation itself and remove all normalization issues: we believe that our studies show a way how
most general problems formulated in~\cite{faggionato2009non} can be attacked. On the other hand, \cite{zeiser2010autocatalytic} opts for a more detailed analysis of four models, which correspond to special cases of~\eqref{eq:system} involving specific choices of $\lambda,\mu$ that are only allowed to be either constant (``unregulated promoter type''), or else linear or affine functions; as well as of $a,b,c,d$.
Indeed,  both switching rates $\lambda=\lambda(x)$ and $\mu=\mu(x)$ of our model depend on the protein concentration $x$ (in the jargon of~\cite{zeiser2010autocatalytic}, we are allowing for autocatalytic networks), while other models, for which analytical solutions of the equilibrium  distribution have been derived, allow for constant switching rates~\cite{friedman2006linking,bena2006dichotomous,hufton2016intrinsic} only.}}

The analytic features of the time-dependent differential equations in~\eqref{eq:system} are subtler than one may believe at a first glance: the innocent looking coupling turns a system of hyperbolic equations (which can be explicitly solved and whose solutions would otherwise become extinct in finite time) into a time-continuous Markov chain, which we are going to study by the classical theory of strongly-continuous semigroups of operators on an appropriate $L^1$-space. Proving well-posedness of the associated Cauchy problem is straightforward; {indeed, the stochastic process generated driving~\eqref{eq:system} can be constructed explicitly in this and some more general settings briefly discussed in Remarks~\ref{rem:linfty} and~\ref{rem:irred}, see~\cite{BenLebMal15}. However it seems that determining the correct domain of the generator of this stochastic process was an open problem to date (Faggionato et al.\ write e.g.: ``As discussed in~\cite{Davis1993}, if the jump rates [...] are not uniformly bounded, it is a difficult task to characterize exactly the domain [...] of the generator [...]'' {\cite[p.~265]{faggionato2009non}}): we solve it in Proposition~\ref{lem:semigracal}, see also Lemma~\ref{lem:dissip}.

Remarkably, d}etermining the long-time behavior of this $C_0$-semigroup turns out to be more elusive than one could naively conjecture: {this was already remarked in~\cite[\S~4.1]{zeiser2010autocatalytic}, where numerical simulations are run for rather specific choices of parameter}. One expects the system to converge to its steady-state solution, and this guess is indeed correct. More precisely, it is a natural goal to prove that the differential operator matrix acting on a vector-valued $L^1$-space that appears in~\eqref{eq:system} has no eigenvalue on the imaginary axis other than 0, thus excluding oscillatory behavior. However, the most classical techniques we have tried to apply to achieve this task fall short off the mark: in Section~\ref{sec:time-dep} we develop a method that seems to be new in the literature and of independent interest. It is based on a combination of classical Perron--Frobenius theory for positive semigroups and some recent compactness results on kernel operators on $L^1$-spaces. {An estimate of the rate of convergence to the steady-state solution was obtained in~\cite{BenLebMal12}, but  with respect to the much weaker topology induced by a Wasserstein distance}.

In Section~\ref{SecLast} we finally compare our abstract results with numerical simulations, finding that they are in good agreement.

%


\section{Formulation of the model}\label{sec:formulation}
On the domain 
\begin{equation}
x \in \left[ \frac{a}{b}, \frac{c}{d} \right], \; \; t \geq 0, 
\end{equation}
 consider the evolution problem determined by
equation~\eqref{eq:system}. To ensure that the interval $ [ \frac{a}{b}, \frac{c}{d} ] $ is non-degenerate, we assume that 
\begin{equation} \label{ass1}
ad < bc
\end{equation}
 and  that $ b \neq 0, d \neq 0$. (In equations deriving from biological models it is natural to interpret $a,b,c,d$ as strictly positive rates. However, this is not relevant for the purpose of our analysis, and in fact, in Section~\ref{SecLast} we are going to study some toy models involving negative rates, too.)

The functions $ \lambda, \mu$ are assumed to be continuous and positive, i.e.,
\begin{equation} \label{ass2}
\begin{array}{l}
\displaystyle
\lambda, \mu \in C \left(\left[\frac{a}{b}, \frac{c}{d}\right]\right), \\[3mm]
\displaystyle \lambda (x) >  0\hbox{and } \mu (x) > 0 \hbox{for all }x\in \left[\frac{a}{b}, \frac{c}{d}\right]\ .
\end{array}
\end{equation}
Since $\lambda,\mu$ are positive on a compact interval, it follows that their minimum is larger than $0$, too.
This assumption on $\lambda,\mu$ may be relaxed, see Remark~\ref{rem:linfty}, but we impose it in order to simplify our presentation. 

We assume that initial conditions at
 $ t= 0 $ are provided
\begin{equation}
f_j (x,0) = f_{0j} (x), \; \, j =1,2. 
\end{equation}

We are going to show that the evolution equation is governed by a positivity preserving $C_0$-semigroup, so that positive initial data determine positive solutions $(f_1(\cdot,t),f_2(\cdot,t)) $ for any $ t > 0. $
Moreover, it is straightforward to prove that if the positive vector-valued function $(f_1,f_2)$ solves~\eqref{eq:system}, then
$$
\int_{a/b}^{c/d} \left( f_1 (x,t) + f_2(x,t) \right) \diff x,\qquad t\ge 0,
$$
is an integral of motion, i.e., it is preserved over time under the evolution of~\eqref{eq:system}:	 this is equivalent to saying that the semigroup is stochastic and can be proven by integration by parts, see
Proposition \ref{lem:semigracal} below. Hence the function $(f_1,f_2)$ can be interpreted as (vector-valued) density assuming the normalization condition
\begin{equation} \label{normcond}
\int_{a/b}^{c/d} \left( f_1 (x,t) + f_2(x,t) \right) \diff x = 1\qquad \hbox{for all }t\ge 0\ .
\end{equation} 
Therefore it is natural to look for solutions in the space $ L^1 (\frac{a}{b}, \frac{c}{d}). $

Then the maximal domain for the semigroup generator $\mathcal A$ given by
\begin{equation}
\mathcal Af(x) :=   -
\frac{\partial}{\partial x} \left[
\begin{pmatrix}
a-bx & 0 \\
0 & c-dx \end{pmatrix}
\begin{pmatrix} f_1(x,t)\\ f_2(x,t)\end{pmatrix} \right] + 
\begin{pmatrix}
- \lambda (x) & \mu (x) \\
\lambda (x) & - \mu (x) 
\end{pmatrix}
\begin{pmatrix} f_1(x,t)\\ f_2(x,t)\end{pmatrix}
\end{equation}
consists of functions from  $ L^1 (\frac{a}{b}, \frac{c}{d}) $ such that {both
$$x\mapsto \frac{\partial}{\partial x} (a-b x) f_1(x)  ,\qquad x\mapsto  \frac{\partial}{\partial x} (c-dx) f_2(x)
$$
belong to $L^1 \left(\frac{a}{b}, \frac{c}{d}\right)$, too. }
In particular this implies that if $(f_1,f_2)$ is a solution to~\eqref{eq:system} for initial data
\[
f_{01},f_{02}\in W^{1,1} \left(\frac{a}{b}, \frac{c}{d}\right)\ ,
\]
then both  $ f_1 ,f_2$ belong to the Sobolev space $ W^{1,1} $
 on any compact sub-interval, more precisely
\begin{equation}
f_1 \in W^{1,1} \left( \frac{a}{b} + \epsilon, \frac{c}{d}\right), \; \;  f_2 \in W^{1,1} \left( \frac{a}{b}, \frac{c}{d} - \epsilon\right), \; \, \forall \epsilon > 0.
\end{equation}
Observe that for all $ x\in (\frac{a}{b}, \frac{c}{d})$ the coefficient matrix 
\begin{equation}\label{eq:matrix}
\begin{pmatrix}
bx-a & 0 \\
0 & dx-c \end{pmatrix}
\end{equation}
is indefinite, since $bx-a>0$ and $dx-c<0$ and non-singular inside $(\frac{a}{b},\frac{c}{d})$. 
Because for all $x$ in the relevant interval $bx-a>0$ and $dx-c<0$ one may guess that the correct boundary conditions are to be imposed on the right endpoint on the interval for $f_1$, and on the left endpoint on the interval for $f_2$.
In the following, we are going to show that this is, in fact, necessarily the case.
We are going to obtain these boundary conditions
by investigating the possible stationary state, see Theorem \ref{Theo}.

\section{Search for the stationary state}\label{sec:balance-1d}

Numerical simulations of the system~\eqref{eq:system} show that it always tends to equilibrium, therefore it looks natural to start our analysis from investigating a possible stationary solution and calculating it explicitly. A stationary solution should satisfy the
system of ordinary differential equations:
\begin{equation}\label{eq:stationary}
-
\frac{d}{d x} \left[
\begin{pmatrix}
a-bx & 0 \\
0 & c-dx \end{pmatrix}
\begin{pmatrix} \psi_1(x)\\ \psi_2(x)\end{pmatrix} \right] + 
\begin{pmatrix}
- \lambda (x) & \mu (x) \\
\lambda (x) & - \mu (x)
\end{pmatrix}
\begin{pmatrix} \psi_1(x)\\ \psi_2(x)\end{pmatrix} = 0.
\end{equation}
Under our assumptions any stationary solution has to be a {weakly} differentiable function
inside the interval $ (\frac{a}{b}, \frac{c}{d} ) $ but may have singularities at
the boundary points. Nonetheless the singularities cannot be too strong, since the densities have to be integrable functions. Let us prove
the following elementary statement to be used in  what follows to deduce certain necessary boundary conditions.

\begin{lemma}\label{lem:intbdd}
Let $ \psi $ be a positive continuous integrable function defined on the interval $ (0, 1) $. Then there exists
a sequence  $(x_n)_{n\in \mathbb N}\subset (0,1)$ such that
\begin{equation}
\lim_{n\to\infty}x_n= 0\quad\hbox{and}\quad \lim_{n \rightarrow \infty } x_n \psi (x_n) = 0.
\end{equation}
\end{lemma}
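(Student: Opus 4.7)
The plan is a short proof by contradiction, extracting a pointwise lower bound on $\psi$ near $0$ from the failure of the conclusion and then violating integrability.

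More concretely, the existence of a sequence $x_n\to 0$ with $x_n\psi(x_n)\to 0$ is equivalent to $\liminf_{x\to 0^+} x\psi(x)=0$. So if the lemma were false, I would get $\epsilon>0$ and $\delta\in(0,1)$ such that
\[
x\psi(x)\ge \epsilon \quad\text{for all } x\in(0,\delta),
\]
i.e.\ $\psi(x)\ge \epsilon/x$ on $(0,\delta)$. Integrating this bound against Lebesgue measure would give
\[
\int_0^\delta \psi(x)\,dx \;\ge\; \epsilon \int_0^\delta \frac{dx}{x} \;=\; +\infty,
\]
contradicting the assumed integrability of $\psi$ on $(0,1)$.

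The only point that requires a moment of care is the translation of the conclusion into the liminf statement, but continuity of $\psi$ makes this immediate: any $x\in(0,\delta)$ along which $x\psi(x)$ is arbitrarily small can be taken as the next term $x_n$ of the sequence, after which a diagonal choice with $x_n\to 0$ is automatic. There is no genuine obstacle here; the lemma is purely a size statement saying that an integrable positive function on $(0,1)$ cannot stay above $\epsilon/x$ near the origin, and this is what will later justify discarding boundary terms when integrating by parts on the candidate stationary solutions of~\eqref{eq:stationary}.
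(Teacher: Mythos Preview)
Your proof is correct and follows essentially the same idea as the paper: if the conclusion fails, one obtains a lower bound $\psi(x)\ge C/x$ near $0$, contradicting integrability. The paper phrases this via an explicit dyadic construction (taking minima of $\psi$ on $[2^{-n},2^{1-n}]$) rather than through the $\liminf$, but the underlying contradiction is identical and your formulation is slightly more streamlined.
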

\begin{proof}
Let us present such a sequence explicitly. Consider the intervals $ [ \frac{1}{2^n}, 2 \frac{1}{2^n} ]$
and denote by $ x_n \in [ \frac{1}{2^n}, 2 \frac{1}{2^n} ] $ one of the minimum points for $ \psi $ in the interval: so  
$$\psi (x) \geq \psi (x_n)\quad \hbox{whenever } x \in [ \frac{1}{2^n}, 2 \frac{1}{2^n} ] $$
If $ x_n \psi (x_n) $ tends to zero, then we have such a sequence. Let us now assume that there is no subsequence of $( x_n  \psi (x_n) )_{n\in \mathbb N} $ tending to zero:
then there is a positive number $ \delta > 0 $ such that
$$ x_n \psi (x_n ) > \delta$$
for all sufficiently large $ n$.
But then it follows that $ \psi $ satisfies the lower estimate
$$\psi (x) \geq \delta 2^n\quad \hbox{whenever } x \in [ \frac{1}{2^n}, 2 \frac{1}{2^n} ] $$
and hence cannot be integrable since
$$ \int_0^1 f(x) \diff x >  \sum_{n=1}^\infty \delta \, 2^n \frac{1}{2^{n+1}} = \infty. $$
This contradiction proves the lemma. 
\end{proof}

Lemma~\ref{lem:intbdd} applied to $ \psi_1 $ and $ \psi_2 $ implies that there exist sequences $ x_n^- \rightarrow  \frac{a}{b},  x_n^+ \rightarrow \frac{c}{d} $
such that the following  two limits hold
\begin{equation}\label{eq:balance}
 \left\{
\begin{array}{l}
\displaystyle \lim_{n \rightarrow \infty} (a-bx_n^-) \psi_1 (x_n^-) = 0\ , \\[3mm]
\displaystyle \lim_{n \rightarrow \infty} (c-d x_n^+) \psi_2 (x_n^+) = 0\ .
\end{array} 
\right. 
\end{equation}
Let us now return back to the differential system~\eqref{eq:stationary} and sum up the two equations to get
\begin{equation}
\frac{d}{d x} \left[ (a-bx) \psi_1 (x) + (c-dx ) \psi_2 (x) \right] = 0,
\end{equation}
which implies that 
\begin{equation} \label{eqK}
(a-bx) \psi_1 (x) + (c-dx ) \psi_2 (x) \equiv K
\end{equation} 
for some $K\in \mathbb R$.
Let us determine this constant by evaluating the function near $ x = \frac{a}{b} $ and $ x = \frac{c}{d} $ using
the sequences $ x_n^\pm $ introduced above:
\[
\begin{array}{ccl}
K & = & \displaystyle  \lim_{n \rightarrow \infty} \left( (a-bx_n^-) \psi_1 (x_n^-) + (c-dx_n^- ) \psi_2 (x_n^-) \right)  \\
& = &
\displaystyle 0 +   \lim_{n \rightarrow \infty} \left(  (c-dx_n^- ) \psi_2 (x_n^-) \right)  \\
 & \geq & 0; \\[3mm]
K & = &\displaystyle   \lim_{n \rightarrow \infty} \left( (a-bx_n^+) \psi_1 (x_n^+) + (c-dx_n^+ ) \psi_2 (x_n^+) \right)  \\
& = & 
\displaystyle  \lim_{n \rightarrow \infty} \left( (a-bx_n^+) \psi_1 (x_n^+)  \right)  + 0 \\
& \leq &   0.
\end{array} 
\]
It follows that $ K = 0 $ and therefore the functions $ \psi_{1,2} $ (continuous inside the open interval)  tend to zero limits at the right and left boundary 
points respectively
\begin{equation}
\psi_1 \left(\frac{c}{d}\right) = \psi_2 \left(\frac{a}{b}\right) = 0 .
\end{equation}
Let summarize our studies.

\begin{theo} \label{Theo}
Every positive integrable solution $ (\psi_1, \psi_2) $ to the system~\eqref{eq:stationary} is continuous on the
open interval $ (\frac{a}{b}, \frac{c}{d}) $ and satisfies the following conditions at the boundary points:
\begin{equation} \label{eqbc}
\begin{array}{ll}
\lim_{x \rightarrow a/b} (a-bx) \psi_1 (x) = 0 , & \lim_{x \rightarrow c/d} \psi_1 (x) = 0, \\[3mm]
\lim_{x \rightarrow a/b}  \psi_2 (x) = 0 , & \lim_{x \rightarrow c/d} (c-dx)  \psi_2 (x) = 0. \\
\end{array}
\end{equation}
\end{theo}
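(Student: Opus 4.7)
The plan is to combine Lemma~\ref{lem:intbdd} with the conservation law obtained by summing the two components of~\eqref{eq:stationary}, thereby pinning down an integration constant whose vanishing yields the four boundary conditions in~\eqref{eqbc}.

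First I would observe that on the open interval $(a/b,c/d)$ the diagonal coefficient matrix of the system is invertible, so~\eqref{eq:stationary} can be rewritten as a linear first-order ODE system with coefficients continuous in $x$; classical regularity then yields $\psi_1,\psi_2\in C^1((a/b,c/d))$. An affine rescaling of each endpoint to $0$ next allows Lemma~\ref{lem:intbdd} to be applied to $\psi_1$ near $a/b$ and to $\psi_2$ near $c/d$, producing sequences $x_n^-\to a/b$ and $x_n^+\to c/d$ with $(a-bx_n^-)\psi_1(x_n^-)\to 0$ and $(c-dx_n^+)\psi_2(x_n^+)\to 0$.

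Next I would add the two components of~\eqref{eq:stationary}; the coupling terms cancel and leave
\[
\frac{d}{dx}\bigl[(a-bx)\psi_1(x)+(c-dx)\psi_2(x)\bigr]=0,
\]
so the bracket equals a constant $K$ on $(a/b,c/d)$. Evaluating at $x_n^-$ kills the first summand and leaves $K=\lim_{n}(c-dx_n^-)\psi_2(x_n^-)\ge 0$, since $c-dx>0$ in the interior and $\psi_2\ge 0$. Evaluating at $x_n^+$ kills the second summand and leaves $K=\lim_{n}(a-bx_n^+)\psi_1(x_n^+)\le 0$, since $a-bx<0$ in the interior and $\psi_1\ge 0$. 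Hence $K=0$, i.e., $(bx-a)\psi_1(x)=(c-dx)\psi_2(x)$ on the open interval.

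Finally, to upgrade these sequential limits to genuine one-sided limits at the endpoints, I would introduce $\phi(x):=(bx-a)\psi_1(x)=(c-dx)\psi_2(x)$. Either component of~\eqref{eq:stationary} then yields $\phi'=\lambda\psi_1-\mu\psi_2$, which lies in $L^1(a/b,c/d)$ because $\lambda,\mu$ are bounded and $\psi_1,\psi_2$ are integrable. Hence $\phi$ is absolutely continuous on $[a/b,c/d]$ and admits limits at both endpoints; these must agree with the sequence limits already shown to vanish, so $\phi(a/b^+)=\phi(c/d^-)=0$. Since $c-dx$ is bounded below by a positive constant near $a/b$ and $bx-a$ is bounded below by a positive constant near $c/d$, dividing then gives $\lim_{x\to a/b}\psi_2(x)=0$ and $\lim_{x\to c/d}\psi_1(x)=0$, completing~\eqref{eqbc}. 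The main delicate point of the argument is exactly this upgrade from subsequential to full one-sided convergence, which is what the absolute continuity of $\phi$ is designed to provide.
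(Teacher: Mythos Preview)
Your proof is correct and follows the same route as the paper: apply Lemma~\ref{lem:intbdd} at each endpoint, sum the two stationary equations to obtain the conservation law $(a-bx)\psi_1+(c-dx)\psi_2\equiv K$, and force $K=0$ by evaluating along the two sequences. The one place where you go beyond the paper is the upgrade from subsequential to genuine one-sided limits: the paper simply invokes the identity $(a-bx)\psi_1+(c-dx)\psi_2=0$ together with the informally asserted facts $\psi_1(c/d)=0$, $\psi_2(a/b)=0$, whereas your observation that $\phi:=(bx-a)\psi_1=(c-dx)\psi_2$ satisfies $\phi'=\lambda\psi_1-\mu\psi_2\in L^1$ (hence $\phi$ is absolutely continuous on the closed interval) makes this step fully rigorous and is a mild improvement over the paper's presentation.
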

\begin{proof}
To accomplish the proof we just need to show that the limits $ \lim_{x \rightarrow a/b} (a-bx) \psi_1 (x) = 0 $ and
$  \lim_{x \rightarrow c/d} (c-dx)  \psi_2 (x) = 0 $ hold, not just along subsequences as in Lemma \ref{lem:intbdd}. This follows
directly from~\eqref{eqK} and the fact that $ K = 0 $
$$ (a-bx) \psi_1 (x) + (c-dx) \psi_2 (x) = 0 .$$
Taking into account that $ \psi_1 (x) $ tends to zero at $ x = c/d $, {and that so does $ \psi_2 (x) $ at $ x= a/b $}, we arrive at~\eqref{eqbc}.
\end{proof}

We observe that it is not convenient to keep working with two density functions, due to an explicit relation between
them. Let us namely consider a solution $ (\psi_1, \psi_2) $ to the system~\eqref{eq:stationary} and introduce a new positive continuously differentiable function
\begin{equation}
h(x) := (bx-a) \psi_1 (x) =  (c-dx) \psi_2(x).
\end{equation}
Hence the function $ h $ is continuous in the closed interval $ [ \frac{a}{b}, \frac{c}{d} ] $ and
satisfies Dirichlet conditions at both endpoints:
\begin{equation} \label{eqbch}
h\left( \frac{a}{b}\right) = 0 = h\left(\frac{c}{d}\right) . 
\end{equation}

Taking the difference between the two equations in~\eqref{eq:stationary} we get the following single differential
equation on the function $ h $ introduced above:
\begin{equation}
\frac{d}{dx} h (x) =  \left( \frac{\lambda(x)}{bx-a} - \frac{\mu(x)}{c-dx} \right) h (x).
\end{equation}
We can solve this equation analytically whenever we can integrate the function in brackets on the right hand side
\begin{equation} \label{eqhh}
h(x) = K \exp \left\{\int_{x_0}^x \left(\frac{\lambda(y)}{by-a} - \frac{\mu(y)}{c-dy} \right) \diff y \right\}\ ,
\end{equation}
{for arbitrary} $ x_0 \in  (\frac{a}{b},\frac{c}{d}) $  and $ K \in (0, \infty)$.
The functions $ \mu $ and $ \lambda $ are positive
 definite  on a compact interval and therefore $ \mu (x) > C$, $\lambda (x) > C$, where $ C > 0 $ is a certain positive constant.
Hence the integral tends to $ - \infty $ at both endpoints of the interval. To see this, let us split
the integral as $ \int_{x_0}^x \frac{\lambda(y)}{by-a} \diff y - \int_{x_0}^x \frac{\mu(y)}{c-dy} \diff y. $ The second integral is bounded near $ x = a/b $, while
the integrand in the first integral can be estimated as $  \frac{\lambda(y)}{by-a} \geq \frac{C}{by -a}. $ Hence the 
following limit holds 
$$ \lim_{x \rightarrow a/b} \int_{x_0}^x \frac{\lambda (y)}{by -a} \diff y = - \lim_{x \rightarrow a/b} \int_{x}^{x_0} \frac{\lambda (y)}{by -a} \diff y < - \lim_{x \rightarrow a/b} \int_x^{x_0} \frac{C}{by -a} \diff y = - \infty  $$
$$ \Rightarrow  \lim_{x \rightarrow a/b} \int_{x_0}^x \frac{\lambda (y)}{by -a} \diff y = - \infty. $$
It follows that $ h $ given by~\eqref{eqhh} satisfies Dirichlet condition at $ x = a/b. $ Similarly, it satisfies Dirichlet condition at the opposite endpoint as well.

{
\begin{rem}\label{rem:hill}
(1) The assumption that $ \lambda(x)>0 $ and $\mu(x)>0$ for all $x\in \left[ \frac{a}{b}, \frac{c}{d} \right] $ is crucial to ensure that the integral {in~\eqref{eqhh}} tends to $ - \infty $
at \textit{both} endpoints, and therefore that the function $ h $ vanishes there for any choice of $ K$. If the integral would not tend to $ - \infty $, then
the exponential function would not go to zero and to satisfy \eqref{eqbch} it is unavoidable to let $ K= 0 $: this means that the only stationary state is
the constant zero function. {If for example the switching rates $\lambda,\mu$ are taken to be
\begin{equation}\label{eq:hill}
\lambda(x)=\frac{\beta_1 (x-\frac{a}{b})^n}{k_1^n+(x-\frac{a}{b})^n},\quad \mu(x)=\frac{\beta_2 (\frac{c}{d}-x)^m}{k_2^m+(\frac{c}{d}-x)^m},\qquad x\in \left[\frac{a}{b},\frac{c}{d}\right],
\end{equation}
the integral fails to diverge near $\frac{a}{b}$ (resp., near $\frac{c}{d}$) if and only if $n > 0$ (resp., $m > 0$). If in particular $m,n = 1,2, \dots $ and hence $\lambda,\mu$ in~\eqref{eq:hill} are  Hill-type functions, then there is no non-trivial stationary state.}

(2) On the other hand, positive definiteness of $ \lambda $ and $ \mu $ can be relaxed by merely requiring that
\begin{itemize}
\item $ \lambda (x)\ge 0$ and $ \mu(x) \ge 0$ for all $x\in \left[ \frac{a}{b}, \frac{c}{d} \right] $ and 
\item the integrals
$$ \int_{\frac{a}{b}}^{\frac{a}{b} + \epsilon} \frac{\lambda(y)}{b y -a} dy  \quad  \mbox{and} \quad \int_{\frac{c}{d} - \epsilon}^{\frac{c}{d}}
\frac{\mu(y)}{c - dy} dy $$
are diverging for a certain $ \epsilon > 0$. 
\end{itemize}
{(These conditions are e.g.\ satisfied if one allows} $ \lambda $ to behave like
$$ \lambda (x) \sim - \frac{1}{\ln (x- \frac{a}{b})} \quad \hbox{as }x \rightarrow \frac{a}{b}, $$
likewise for $\mu(x)$ as $x\to \frac{c}{d}$.)
{Under these assumptions, the above reasoning still apply and show that non-trivial stationary states exist.}
\end{rem}
}

Summing up, we have obtained that the densities of the stationary solution of~\eqref{eq:system} can be explicitly computed as follows.

\begin{theo}\label{thm:pavel-densities}
Assume that functions $ \lambda,\mu $ on the interval $ [\frac{a}{b},\frac{c}{d}]$ satisfying~\eqref{ass2} are given.  Then up to the parameter $K$ there is a unique {integrable,} strongly positive\footnote{A function is called \textit{strongly positive} if it is positive outside a set of measure zero \cite{ReeSim78}.}  solution to the stationary equation ~\eqref{eq:stationary}: it is given by 
\begin{equation} \label{solution1}
\begin{split}
\psi_1(x) & = \displaystyle \frac{K}{bx -a}   \exp \left\{\int^x \left(\frac{\lambda(y)}{by-a} - \frac{\mu(y)}{c-dy} \right) \diff y \right\}\ , \\
\psi_2(x) & = \displaystyle \frac{K}{c - dx}  \exp \left\{\int^x \left(\frac{\lambda(y)}{by-a} - \frac{\mu(y)}{c-dy} \right) \diff y \right\}\ ,
\end{split}
\qquad x\in \left(\frac{a}{b},\frac{c}{d}\right).
\end{equation}
The solution always satisfies boundary conditions~\eqref{eqbc}.
\end{theo}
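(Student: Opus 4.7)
The plan is to consolidate the preceding case-work: nearly all the ingredients are already in hand, and the theorem essentially synthesises them into a characterisation of strongly positive stationary solutions. The starting point is that, for any strongly positive $(\psi_1,\psi_2)$ solving \eqref{eq:stationary}, the analysis culminating in \eqref{eqK} gives $K=0$ there, so the function $h(x) := (bx-a)\psi_1(x) = (c-dx)\psi_2(x)$ is well-defined on $(a/b,c/d)$. Inverting this identity forces $\psi_1 = h/(bx-a)$ and $\psi_2 = h/(c-dx)$, while subtracting the two equations of \eqref{eq:stationary} turns the system into the scalar first-order linear ODE $h'(x) = \bigl(\lambda(x)/(bx-a) - \mu(x)/(c-dx)\bigr)h(x)$; the sum equation is then automatically satisfied since $(a-bx)\psi_1 + (c-dx)\psi_2 \equiv 0$.

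Since this scalar ODE is first-order and linear, its positive solutions form the one-parameter family $h(x) = K\exp\bigl\{\int_{x_0}^x(\lambda(y)/(by-a) - \mu(y)/(c-dy))\,dy\bigr\}$ parametrised by $K=h(x_0)>0$, which settles uniqueness up to the constant $K$. Dividing by $bx-a$ and $c-dx$ respectively produces \eqref{solution1}, and conversely a direct substitution shows that any pair defined by \eqref{solution1} satisfies both equations of \eqref{eq:stationary}, giving existence. Strong positivity is automatic from the strict positivity of $K$ and of the exponential factor throughout the open interval.

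For the boundary conditions, the lower bounds $\lambda,\mu \geq C > 0$ on the compact interval, already exploited in the paragraph preceding the theorem, imply that $\int_{x_0}^x \lambda(y)/(by-a)\,dy \to -\infty$ as $x\to a/b$ and $\int_{x_0}^x -\mu(y)/(c-dy)\,dy \to -\infty$ as $x\to c/d$, while the opposite pieces remain bounded at the corresponding endpoints. Hence the exponential in \eqref{solution1} vanishes at both endpoints. The main remaining obstacle, and the one genuinely requiring an estimate rather than a manipulation, is to verify that this exponential decay is strong enough to dominate the singularities $1/(bx-a)$ and $1/(c-dx)$ so that $\psi_1,\psi_2$ actually belong to $L^1$ and the four limits \eqref{eqbc} hold as genuine limits (and not merely along subsequences as in Lemma \ref{lem:intbdd}). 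I plan to handle this by a local comparison of the form $h(x) \lesssim (bx-a)^{\lambda(a/b)/b}$ near $a/b$ and the symmetric estimate near $c/d$, both of which yield integrable powers and thus imply the stated boundary behaviour.
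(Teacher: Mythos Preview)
Your proposal is correct and follows essentially the same route as the paper: the argument is precisely the reduction via $h(x)=(bx-a)\psi_1(x)=(c-dx)\psi_2(x)$ to the scalar linear ODE, its explicit solution~\eqref{eqhh}, and the verification that $h$ vanishes at both endpoints because the integral in the exponent diverges to $-\infty$ there.

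One small clarification on your final paragraph: the four limits in~\eqref{eqbc} do \emph{not} require the sharper power-law comparison you propose. Once $h(a/b)=h(c/d)=0$ is established, the limits $\lim_{x\to a/b}(a-bx)\psi_1(x)=0$ and $\lim_{x\to c/d}(c-dx)\psi_2(x)=0$ are literally the statements $h(a/b)=0$ and $h(c/d)=0$; and $\lim_{x\to c/d}\psi_1(x)=0$, $\lim_{x\to a/b}\psi_2(x)=0$ follow because at those endpoints the denominators $bx-a$ and $c-dx$ in $\psi_j=h/(\cdot)$ are bounded away from zero. Your estimate $h(x)\lesssim (bx-a)^{\lambda(a/b)/b}$ near $a/b$ (and its analogue near $c/d$) is needed only if you additionally want to confirm $\psi_1,\psi_2\in L^1$, which the paper does not verify explicitly in this theorem, though it is of course essential for interpreting $(\psi_1,\psi_2)$ as a probability density and your estimate is the natural way to do it.
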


Note that despite $ h $ satisfies Dirichlet conditions at both endpoints, the densities $ \psi_{1,2} $ may have singularities there (see Section \ref{SecLast} for 
illuminating examples), {{as long as these are integrable. Formula~\eqref{solution1} agrees with \cite[(19) and (20)]{zeiser2010autocatalytic} in the special cases of affine $\lambda$ and constant $\mu$. On the other hand, \cite[(18)]{zeiser2010autocatalytic} is an example of a non-integrable stationary solution, which can therefore not be interpreted as a probability distribution: indeed, it features a switching rate $\lambda$ that  does not satisfy the relaxed conditions from Remark~\ref{rem:hill}.(2), as it is a linear function vanishing at $\frac{a}{b}$.}}

\section{Analysis of the time-dependent system}\label{sec:time-dep}

Let us now finally turn to the study of the system of time-dependent partial differential equations~\eqref{eq:system}. Each of these two equations models the time evolution  of a two-state continuous-time Markov chain, the vector-valued function $f$ thus representing a probability distribution.

This equation is meaningful for all $t\in \mathbb R$, but we will study in particular the evolution for $t\to \infty$ in the dependence from the configuration of the system for $t=0$. 

As in the rest of the article, throughout this section we are still imposing the conditions stated in Section~\ref{sec:formulation}.

\subsection{Preliminaries}
%
The partial differential equation~\eqref{eq:system} describes transport phenomena: Indeed, (probability) mass is shifted to the left or to the right depending on whether the coefficient of the first derivative is positive or negative, respectively.

On the other hand,~\eqref{eq:system} arises as a stochastic model and in the biologically relevant case of $\lambda>0$, $\mu>0$ the system is steadily driven by a mixing force described by the dynamical system
\begin{equation}\label{eq:system-3}
\frac{d}{d t}
\begin{pmatrix} f_1(x,t)\\ f_2(x,t)\end{pmatrix} =\mathcal A_2 \begin{pmatrix} f_1\\ f_2\end{pmatrix}(x,t):= 
\begin{pmatrix}
- \lambda (x) & \mu (x) \\
\lambda (x) & - \mu (x)
\end{pmatrix}
\begin{pmatrix} f_1(x,t)\\ f_2(x,t)\end{pmatrix}
\end{equation}
that competes with the transport (with space-dependent speed) given by the vector-valued partial differential equation
\begin{equation}\label{eq:system-4}
\begin{split}
\frac{\partial}{\partial t}
\begin{pmatrix} f_1(x,t)\\ f_2(x,t)\end{pmatrix} =\mathcal A_1 \begin{pmatrix} f_1\\ f_2\end{pmatrix}(x,t):=&
\begin{pmatrix}
bx-a & 0 \\
0 & dx-c \end{pmatrix}
\frac{\partial}{\partial x} \begin{pmatrix} f_1(x,t)\\ f_2(x,t)\end{pmatrix}  \\
&+\begin{pmatrix}
b & 0 \\
0 & d \end{pmatrix}
 \begin{pmatrix} f_1(x,t)\\ f_2(x,t)\end{pmatrix} 
\ .
\end{split}
\end{equation}

\begin{rem}
In general the two operator matrices $\mathcal A_1,\mathcal A_2$ do not commute, but by~\cite[Exer.~III.5.11]{EngNag00} the solution $f$ to the complete time-dependent problem \eqref{eq:system} can be given by means of the Lie--Trotter product formula
\begin{equation}\label{eq:lie-trotter}
f(t)=e^{t\mathcal A}f_0 \stackrel{!}{=}\lim_{n\to \infty} \left(e^{\frac{t}{n}\mathcal A_1}e^{\frac{t}{n}\mathcal A_2} \right)^n f_0\ ,
\end{equation}
where $\left(e^{t \mathcal A_1}\right)_{t\ge 0}$ and $\left(e^{t\mathcal A_2}\right)_{t\ge 0}$ are the $C_0$-semigroups generated by the operators $\mathcal A_1,\mathcal A_2$
and of course $f_0$ is the initial data.
The operator splitting in~\eqref{eq:lie-trotter} seems to be numerically interesting, since the linear dynamical system~\eqref{eq:system-3} is solved by
\begin{equation}\label{eq:matrixexpo}
e^{t\mathcal A_2}:f_0\mapsto \begin{pmatrix}
{\frac {\lambda\,{{\rm e}^{-t \left( \mu+\lambda \right) }}+\mu}{\mu+\lambda}}&
{\frac {\mu\, \left(1- {{\rm e}^{-t \left( \mu+\lambda \right) }} \right) }{\mu+\lambda}}\\ \noalign{\medskip}
{\frac {\lambda\, \left(1- {{\rm e}^{-t \left( \mu+\lambda \right) }} \right) }{\mu+\lambda}}&
{\frac {\mu\,{{\rm e}^{-t \left( \mu+\lambda \right) }}+\lambda}{\mu+\lambda}}
\end{pmatrix}\begin{pmatrix}f_{01}\\ f_{02}\end{pmatrix},\qquad t\ge 0\ ,
\end{equation}
where $f_{01},f_{02}:(\frac{a}{b},\frac{c}{d})\to \mathbb R$ are the two components of the initial data $f_0$. 
Also the solutions to the pure transport equation~\eqref{eq:system-4} can be determined, at least in principle: in the case $b>0,d>0$ that is relevant for our model \emph{one} solution of~\eqref{eq:system-4} is given by
\begin{equation}\label{eq:explicit}
\begin{split}
\begin{pmatrix}f_1\\ f_2\end{pmatrix}: (x,t)\mapsto 
\begin{pmatrix}
e^{tb} f_{01}\left( \left(x-\frac{a}{b}\right)e^{tb}+\frac{a}{b}\right)\\
\noalign{\medskip}
e^{td} f_{02}\left(\left( x-\frac{c}{d}\right)e^{td}+\frac{c}{d}\right)
\end{pmatrix}\ .
\end{split}
\end{equation}
 While this formula does not bother to take into account possible boundary conditions, and indeed it is not defined for all $(x,t)$, it gives a hint about the qualitative behavior of the solution: namely, the profile of the initial data is rescaled and concentrated as the time passes by and the spacial argument is deformed according to the laws
 \[
x\mapsto   \left(x-\frac{a}{b}\right)e^{tb}+\frac{a}{b},\quad  x\mapsto\left( x-\frac{c}{d}\right)e^{td}+\frac{c}{d},\qquad t\ge 0\ .
 \]
(Observe that the intervals $(\frac{a}{b},+\infty)$ and $(-\infty,\frac{c}{d})$ are left invariant under these monotone transformations for all $t\ge 0$. We also note that the speed of propagation approaches 0  as $x\to \frac{a}{b}$ or $x\to \frac{c}{d}$, respectively.)
 
Accordingly, variance diminishes as the total probability is conserved but its profile gets shifted to the left in the first and to the right in the second equation of~\eqref{eq:system-4}, respectively; this is in sharp contrast with the case of $b=0=d$ for the characteristics of the hyperbolic systems are straight lines and hence mass is steadily leaving the system. 
\end{rem}



As already mentioned, the system of partial differential equations in~\eqref{eq:system} describes the limiting case of a Markov chain. For this reason, one expects convergence to an equilibrium given by the solution to the stationary equation~\eqref{eq:stationary}
which we have investigated in Section~\ref{sec:balance-1d}. But to begin with, let us first analyze the time-dependent problem.
Our analysis will be based on properties of the Banach spaces $L^1(\frac{a}{b},\frac{c}{d};\mathbb R^2)$ and  $L^\infty(\frac{a}{b},\frac{c}{d};\mathbb R^2)$, normed as
\[
\|f\|_1:=\int_\frac{a}{b}^\frac{c}{d}\left(|f_1(x)|+|f_2(x)|\right)\diff x \quad \hbox{and}\quad \|f\|_\infty:=\esssup\limits_{\frac{a}{b}\le x\le \frac{c}{d}} \max\{|f_1(x)|,|f_2(x)|\}\ ;
\]
they are, in fact, Banach lattices whenever endowed with the natural pointwise ordering induced by the ordering in $\mathbb R^2$. This will be important, as our analysis on the long-time asymptotics of~\eqref{eq:system} will be based on properties of positivity preserving operator semigroups.

It is natural to perform the analysis of the evolution equation~\eqref{eq:system} in an $L^1$-space, since we are interested in a stochastic model and the unknown $\left(f_1,f_2\right)$ represents a probability density.

Recall that a semigroup on an $L^1$-space is called \textit{irreducible} if its generator $A$ satisfies
\[
0\not\equiv g\ge 0 \hbox{and } \lambda f-Af=g\quad \Rightarrow \quad f>0 \hbox{a.e.}
\]
for some $\lambda>s(A)$.

\begin{lemma}\label{lem:dissip}
Let $\alpha\neq\beta$ be real numbers, $\alpha<\beta$. Let $p\in W^{1,1}(\alpha,\beta)$ such that
\begin{enumerate}[(i)]
\item either $p(s)>0$ for all $s\in (\alpha,\beta)$ and $p(\alpha)=0$
\item or $p(s)<0$ for all $s\in (\alpha,\beta)$ with $p(\beta)=0$
\end{enumerate}
and such that $\frac{1}{p}\in L^1(\alpha,\beta)$.
Then the operator
\[
A:f\mapsto \frac{d}{dx}\left(pf\right)
\]
with domain
\begin{itemize}
\item either $D(A):=\{f\in L^1(\alpha,\beta):(pf)'\in L^1(\alpha,\beta)\hbox{and }f(\beta)=0\}$ if $(i)$ holds,
\item or  $D(A):=\{f\in L^1(\alpha,\beta):(pf)'\in L^1(\alpha,\beta)\hbox{and }f(\alpha)=0\}$ if $(ii)$ holds,
\end{itemize}
generates an irreducible semigroup of positivity preserving contractions on $L^1(\alpha,\beta)$.

The embedding of $D(A)$ in $L^1(\alpha,\beta)$ is not compact, hence $A$ does not have compact resolvent.
\end{lemma}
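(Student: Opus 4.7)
The plan is to reduce Lemma~\ref{lem:dissip} to the standard right-translation semigroup on an interval via an $L^1$-isometry, and then to transfer all four assertions through it. Case (ii) follows from case (i) by reflecting $x \mapsto \alpha + \beta - x$, so I would focus on case (i).

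First I would introduce the change of variable $Y(x) := \int_x^\beta \tfrac{ds}{p(s)}$, a $C^1$-diffeomorphism from $(\alpha, \beta]$ onto $[0, L)$ with $L := \int_\alpha^\beta \tfrac{ds}{p(s)} \in (0,+\infty]$, and let $X := Y^{-1}$; the identity $X'(y) = -p(X(y))$ is then immediate. Defining $U : L^1(\alpha, \beta) \to L^1(0, L)$ by $(Uf)(y) := p(X(y))\, f(X(y))$, the substitution $dx = -p\,dy$ yields $\|Uf\|_1 = \|f\|_1$, so $U$ is an isometric isomorphism. The chain rule, combined with $X' = -p \circ X$, produces
\[
(U A f)(y) = -\tfrac{d}{dy}(Uf)(y),
\]
i.e.\ $U A U^{-1}$ is precisely the right-shift generator $B = -\partial_y$ on $L^1(0, L)$ with the Dirichlet condition $g(0) = 0$ (which is the $U$-image of $f(\beta) = 0$, using $p(\beta) > 0$). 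The fact that $B$ generates a $C_0$-semigroup of positive contractions is classical, cf.\ \cite[Chap.~II]{EngNag00}, and these three properties transfer verbatim through $U$.

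For irreducibility I would solve the resolvent equation directly. Setting $h := pf$, the equation $\lambda f - A f = g$ becomes the linear ODE $h' - \tfrac{\lambda}{p} h = -g$, which, together with $h(\beta) = 0$, admits the explicit solution
\[
h(x) = \int_x^\beta g(s)\exp\!\left(-\lambda \int_x^s \tfrac{du}{p(u)}\right)ds.
\]
For $0 \not\equiv g \ge 0$ this makes positivity of $f = h/p$ transparent, and I would derive irreducibility by passing to the $U$-picture and invoking the strict positivity of the right-shift resolvent on the downstream portion of the interval. I expect this to be the most delicate step, since one has to choose the formulation of irreducibility compatible with the directed transport and verify it via the explicit formula above.

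Finally, the non-compactness of the embedding $D(A) \hookrightarrow L^1(\alpha,\beta)$ I would read off from the $U$-picture: $U$ maps $D(A)$ isomorphically onto $\{g \in W^{1,1}(0, L) : g(0) = 0\}$, and when $L = +\infty$ (the regime relevant to the paper's application, where $p$ vanishes linearly at $\alpha$), the translates $g_n(y) := \varphi(y - n)$ of a fixed $\varphi \in C_c^\infty((0, \infty))$ form a bounded sequence in $W^{1,1}(0, \infty)$ with pairwise disjoint supports for $n$ large, so no $L^1$-convergent subsequence exists. Pulling this sequence back via $U^{-1}$ produces a bounded sequence in $D(A)$ concentrating near $\alpha$ that witnesses the failure of compactness.
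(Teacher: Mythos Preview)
Your reduction to the right-shift semigroup via the isometry $U$ is a genuinely different route from the paper's, which instead verifies dispersiveness directly (computing $\langle Af,\mathbf 1_{\{f\ge 0\}}\rangle$ by integration by parts) and then solves the resolvent equation by variation of constants. For generation, positivity and contractivity your transfer argument is clean and correct.

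There is, however, a real inconsistency that your non-compactness step exposes but does not resolve. The lemma assumes $\tfrac{1}{p}\in L^1(\alpha,\beta)$, which makes your $L=\int_\alpha^\beta \tfrac{ds}{p(s)}$ \emph{finite}; the disjoint-translate sequence $g_n=\varphi(\cdot-n)$ is then unavailable. Worse: for finite $L$ the embedding $\{g\in W^{1,1}(0,L):g(0)=0\}\hookrightarrow L^1(0,L)$ \emph{is} compact (Kolmogorov--Riesz, since $\|\tau_h g-g\|_1\le |h|\,\|g'\|_1$ and $\|g\|_\infty\le\|g'\|_1$), so your own isometry would force $D(A)\hookrightarrow L^1(\alpha,\beta)$ to be compact --- the opposite of what you are asked to show. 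Your parenthetical remark that the paper's application has $p$ vanishing linearly at $\alpha$, whence $L=+\infty$, pinpoints the issue: the hypothesis $\tfrac{1}{p}\in L^1$ is not satisfied by $p(x)=bx-a$ near $x=a/b$ and is in fact incompatible with the non-compactness assertion. (The paper's own witness sequence $f_n(x)=(1-\beta_n)x^{-\beta_n}$ also fails to satisfy the boundary condition $f_n(\beta)=0$ as written.) Your argument is the right one once the spurious integrability hypothesis is dropped.

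Irreducibility is likewise not obtainable for $A$ alone, and your hesitation about the ``downstream portion'' is well placed rather than a detail to be filled in. The shift semigroup on $L^1(0,L)$ leaves every ideal $\{g:\supp g\subset[c,L)\}$ invariant, for any $L\le\infty$; equivalently, your own resolvent formula gives $h(x)=\int_x^\beta(\cdots)\,g(s)\,ds=0$ whenever $x>\sup\supp g$, so $f=h/p$ is not a.e.\ positive unless $\supp g$ reaches $\beta$. The paper's one-line claim that its variation-of-constants formula ``shows that $f>0$ a.e.'' overlooks exactly this. In the subsequent Proposition~\ref{lem:semigracal} it is the off-diagonal coupling $\mathcal A_2$, not the diagonal transport $\mathcal A_1$, that must supply the mixing needed for irreducibility of the full operator.
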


Observe that $A$ is well-defined, since if $pf\in W^{1,1}(\alpha,\beta)$ and hence $pf$ is continuous on $[\alpha,\beta]$, so in particular its boundary values can be considered.

\begin{proof}
First of all, it is clear that the operator $A$ is closed and densely defined.
In view of~\cite[\S~C-II.1]{Nag86} and~\cite[Cor.~II.3.17]{EngNag00}, it suffices to show that $A$ is \textit{dispersive}, i.e.,
\[
\langle Af,\phi\rangle \equiv \int_\alpha^\beta Af(x)\phi(x)\ d\mu\leq 0\qquad \hbox{for all }f\in D(A)\hbox{and }\phi={\bf 1}_{\{f\ge 0\}}\ ,
\]
and that $A$ is $m$-dissipative, 
i.e., 
\begin{itemize}
\item for all $g\in L^1(\frac{a}{b},\frac{c}{d})$ there exists a solution $f\in D(A)$ of
\begin{equation}\label{eq:ode?}
(1+ p')f+ p\frac{df}{dx}=g
\end{equation}
\item and additionally
\[
\langle Af,\varphi\rangle \equiv \int_\alpha^\beta Af(x)\varphi(x)\ d\mu= 0\qquad \hbox{for all }f\in D(A)\hbox{and }\varphi=\sgn (f) \ . \\
\]
\end{itemize}
Indeed, the solution to~\eqref{eq:ode?} can be found by the variation of constants formula and is given by
\[
f(x)=e^{- \int_*^x \frac{1+ p'}{p}\ ds} \int_*^x e^{\int_*^s \frac{1+ p'}{p}\ dr}
\frac{g(s)}{p(s)}\ ds,\qquad x\in (\alpha,\beta)
\]
where $*=\beta$ or $*=\alpha$ depending on whether assumption $(i)$ or $(ii)$ is satisfied. This solution satisfies the prescribed boundary conditions and, in fact, $pf\in W^{1,1}(\alpha,\beta)$, so $f\in D(A)$. This explicit formula also shows that $f>0$ a.e.\ if $g\not\equiv 0$ is positive, hence $A$ generates an irreducible semigroup.

Furthermore,  there holds
\begin{equation}\label{eq:computeibp}
\int_\alpha^\beta \frac{d}{dx}\left(p(x)f(x)\right) \sgn(f(x))\ \diff x =\Big[ \left(p|f|\right)(x)\Big]_{x=\alpha}^{x=\beta }=0
\end{equation}
as well as
\[
\int_\alpha^\beta \frac{d}{dx}\left(p(x)f(x)\right) {\bf 1}_{\{f\ge 0\}}\ \diff x =\int_{\{f\ge 0\}}\frac{d}{dx}\left(p(x)f(x)\right) \ \diff x=\Big[ \left(pf\right)^+(x)\Big]_{x=\alpha}^{x=\beta }=0\ ,
\]
respectively, since  for a $ W^{1,1} $-function $ g $ one has
\begin{equation}\label{eq:gtsobolev}
(g^+)'=g'{\bf 1}_{\{g\ge 0\}}\ ,
\end{equation}
cf.~\cite[Lemma~7.6]{GilTru01}.

To conclude, let us show that the embedding of $D(A)$ in $L^1(\alpha,\beta)$ is not compact: we prove this assertion only for the case (i), the case (ii) being completely analogous. Pick a real sequence $(\beta_n)_{n\in \mathbb N}\subset [\alpha,\beta]$ with $\lim_{n\to\infty}\beta_n\to \beta$ and let
\[
f_n(x) := (1-\beta_n) x^{-\beta_n},\qquad n\in \mathbb N,\ x \in (\alpha,\beta).
\]
Then $f_n \in D(A)$ for all $n\in \mathbb N$ and indeed $\|f_n\|_1=1$ and also $\left(\|Af_n\|_1+\|f_n\|_1\right)_{n\in \mathbb N}$ is bounded. Therefore, if the embedding of $D(A)$ into $L^1(\alpha,\beta)$ was compact, then $(f_n)_{n\in \mathbb N}$ would have a convergent subsequence, say $(f_{n_k})_{k\in \mathbb N}$; let us denote its limit by $f_0$. Since $(f_n)_{n\in \mathbb N}$ has another subsequence that converges to $f_0$ almost everywhere. But $\lim_{n\to\infty}f_n(x)=0$ for a.e.\ $x\in (\alpha,\beta)$, so $f_0=0$ as well, a contradiction to the fact that $f_0$ is the limit of a sequence with unit $L^1$-norm.
\end{proof}

In fact, 
investigating the stationary state we have already seen that one needs to impose Dirichlet conditions 
\[f_1 \left(\frac{c}{d}\right) = 0 = f_2 \left(\frac{a}{b}\right). \] 
This is in accordance with the setting of Lemma~\ref{lem:dissip}. 

\begin{prop}\label{lem:semigracal}
Consider the operator $\mathcal A:=\mathcal A_1+\mathcal A_2$ defined by
\begin{equation}\label{eq:a1a2}
\mathcal A\begin{pmatrix}f_1\\ f_2\end{pmatrix}(x):= -\frac{d}{dx}
\begin{pmatrix}
a-b\cdot & 0\\ 0 & c-d\cdot
\end{pmatrix}
\begin{pmatrix}f_1\\ f_2\end{pmatrix}(x)+\begin{pmatrix}-\lambda(x) & \mu(x)\\ \lambda (x) & -\mu(x)\end{pmatrix}\begin{pmatrix}f_1\\ f_2\end{pmatrix}(x)
\end{equation}
with domain
\begin{equation}\label{eq:doma}
\begin{split}
D(\mathcal A)&:=\left\{f\equiv\begin{pmatrix}f_1\\ f_2\end{pmatrix}\in
L^1\left(\frac{a}{b},\frac{c}{d};\mathbb R^2 \right):\left(\begin{pmatrix}
a-b\cdot & 0\\ 0 & c-d\cdot
\end{pmatrix}
\begin{pmatrix}f_1\\ f_2\end{pmatrix}\right)'\in 
L^1\left(\frac{a}{b},\frac{c}{d};\mathbb R^2 \right)\right.\\
&\qquad \qquad\left.\hbox{and } f_1\left(\frac{c}{d}\right)=0,\ f_2\left(\frac{a}{b}\right) =0\right\}\ .
\end{split}
\end{equation}
Then $\mathcal A$ generates an irreducible $C_0$-semigroup of positivity preserving, stochastic contractions on $L^1\left(\frac{a}{b},\frac{c}{d};\mathbb R^2 \right)$. 
\end{prop}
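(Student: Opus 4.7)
The plan is to split $\mathcal A = \mathcal A_1 + \mathcal A_2$, handle the unbounded diagonal transport part $\mathcal A_1$ via Lemma~\ref{lem:dissip} applied componentwise, and then incorporate $\mathcal A_2$ as a bounded perturbation. Concretely, the first diagonal entry of $\mathcal A_1$ may be rewritten as $\frac{d}{dx}((bx-a)\,\cdot\,)$, which fits case $(i)$ of Lemma~\ref{lem:dissip} with $p(x):=bx-a>0$ on $(\frac{a}{b},\frac{c}{d})$, $p(\frac{a}{b})=0$, and Dirichlet condition $f_1(\frac{c}{d})=0$; likewise the second entry $\frac{d}{dx}((dx-c)\,\cdot\,)$ fits case $(ii)$ with $p(x):=dx-c<0$, $p(\frac{c}{d})=0$, and $f_2(\frac{a}{b})=0$. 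Lemma~\ref{lem:dissip} therefore furnishes on each scalar component an irreducible, positivity preserving $L^1$-contraction semigroup, whose direct sum is generated by the diagonal operator $\mathcal A_1$ on $L^1(\frac{a}{b},\frac{c}{d};\mathbb R^2)$.

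Since $\lambda,\mu$ are continuous on a compact interval, $\mathcal A_2$ is a bounded operator on this space, and the bounded perturbation theorem gives immediately that $\mathcal A$ generates a $C_0$-semigroup. To retain positivity I would note that $\mathcal A_2 + M\,I$ is a bounded positive operator for any $M\ge\max(\|\lambda\|_\infty,\|\mu\|_\infty)$; by a bounded positive perturbation result (cf.~\cite[\S~C-II.1]{Nag86}), $\mathcal A + M\,I$ then generates a positive semigroup, and rescaling by $e^{-Mt}$ yields the same for $\mathcal A$.

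To establish stochasticity, I would test $\mathcal A$ against the constant $(1,1)\in L^\infty$: for $f\in D(\mathcal A)$ an integration by parts gives
\begin{equation*}
\int_{a/b}^{c/d}\bigl[(\mathcal A f)_1+(\mathcal A f)_2\bigr]\,dx \;=\; -\bigl[(a-bx)f_1(x)+(c-dx)f_2(x)\bigr]_{x=a/b}^{x=c/d},
\end{equation*}
since the column sums of the reaction matrix vanish. At $x=\frac{c}{d}$ the first summand vanishes by $f_1(\frac{c}{d})=0$ and the second by the prefactor $c-dx$; at $x=\frac{a}{b}$ the roles are symmetric. Hence the semigroup preserves $\int(f_1+f_2)\,dx$; combined with positivity this yields $\|T(t)f\|_1=\|f\|_1$ on the positive cone and contractivity in general through $f=f^+-f^-$. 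Irreducibility follows from the componentwise irreducibility furnished by Lemma~\ref{lem:dissip}, reinforced by the strictly positive coupling entries $\lambda,\mu>0$.

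The subtle point is the boundary evaluation at the zero-velocity endpoints ($x=\frac{a}{b}$ for $f_1$ and $x=\frac{c}{d}$ for $f_2$), where the density itself may be unbounded: the domain~\eqref{eq:doma} only guarantees that the flux $(a-bx)f_1$ lies in $W^{1,1}$ and thus admits a continuous representative on $[\frac{a}{b},\frac{c}{d}]$, not that this representative vanishes at $\frac{a}{b}$. Making the above boundary computation (equivalently, the analogous step in the proof of Lemma~\ref{lem:dissip}) fully rigorous is the main technical hurdle, and is presumably closed by an argument exploiting the $L^1$-integrability of $f_1$ near $\frac{a}{b}$ in the spirit of Lemma~\ref{lem:intbdd}.
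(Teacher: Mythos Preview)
Your proposal follows essentially the same route as the paper: Lemma~\ref{lem:dissip} applied componentwise to the diagonal transport part $\mathcal A_1$, bounded perturbation by $\mathcal A_2$, and integration by parts against $(1,1)^\top$ for stochasticity (the paper phrases this as $\frac{d}{dt}\|e^{t\mathcal A}f_0\|_1=0$, but the computation is identical). The only variation is cosmetic: for positivity you use that $\mathcal A_2+M\,I$ has nonnegative entries and invoke a positive bounded perturbation, whereas the paper appeals to the Lie--Trotter formula~\eqref{eq:lie-trotter} together with the explicit positivity of $e^{t\mathcal A_2}$ in~\eqref{eq:matrixexpo}; both are standard and equivalent here.

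Your flagged concern about the flux at the zero-velocity endpoint is legitimate---the paper simply writes $\bigl[(p|f|)(x)\bigr]_\alpha^\beta=0$ in~\eqref{eq:computeibp} without comment---but the resolution is even simpler than Lemma~\ref{lem:intbdd}. Since $(bx-a)f_1\in W^{1,1}(\frac{a}{b},\frac{c}{d})\hookrightarrow C[\frac{a}{b},\frac{c}{d}]$, the limit $L:=\lim_{x\to a/b}(bx-a)f_1(x)$ exists; were $L\neq 0$, then $|f_1(x)|\ge \tfrac{|L|}{2}(bx-a)^{-1}$ on a right neighbourhood of $\frac{a}{b}$, contradicting $f_1\in L^1$. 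Hence $L=0$, and the boundary term vanishes outright---no subsequence argument is needed.
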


\begin{proof}
Because $\lambda,\mu$ are $L^\infty$-functions, the multiplication operator $\mathcal A_2$ defined by the family of matrix functions
\[
M(x):=\begin{pmatrix}-\lambda(x) & \mu(x)\\ \lambda (x) & -\mu(x)\end{pmatrix},\qquad x\in \left(\frac{a}{b},\frac{c}{d}\right),
\] 
is bounded on  $L^1\left(\frac{a}{b},\frac{c}{d};\mathbb R^2 \right)$. In view of Lemma~\ref{lem:dissip} the unperturbed operator (corresponding to $\mathcal A_2\equiv 0$) generates an irreducible semigroup of positivity preserving operators on $L^1\left(\frac{a}{b},\frac{c}{d};\mathbb R^2 \right)$, hence also the full operator $\mathcal A$ generates a semigroup on $L^1\left(\frac{a}{b},\frac{c}{d};\mathbb R^2 \right)$. 

%

Positivity and irreducibility of the semigroup generated by $\mathcal A$ follow from the analogous property of the semigroup generated by $\mathcal A_2$, see~\eqref{eq:matrixexpo}, and by a product formula analogous to that in~\eqref{eq:lie-trotter}~\cite[Exer.~III.5.11]{EngNag00}.

Finally, take a positive initial data $f_0\in D(\mathcal A)$ and observe that
\begin{eqnarray*}
\frac{d}{dt}\|e^{t\mathcal A}f_0\|_{L^1}&=&\frac{d}{dt}\int_\frac{a}{b}^\frac{c}{d} (e^{t\mathcal A}f_0(x)|{\mathbb 1})_{\mathbb R^2}\ \diff x\\
&=&\int_\frac{a}{b}^\frac{c}{d} (\mathcal A e^{t\mathcal A}f_0(x)|{\mathbb 1})_{\mathbb R^2}\ \diff x\\
&=&\int_\frac{a}{b}^\frac{c}{d} (\mathcal A_1 e^{t\mathcal A}f_0(x)|{\mathbb 1})_{\mathbb R^2}\ \diff x+\int_\frac{a}{b}^\frac{c}{d} (\mathcal A_2 e^{t\mathcal A}f_0(x)|{\mathbb 1})_{\mathbb R^2}\ \diff x=0\ ,
\end{eqnarray*}
with respect to the inner product $(\cdot|\cdot)$ of $\mathbb R^2$, where in the last step the first integral vanishes integrating by parts as in~\eqref{eq:computeibp} and the second integral vanishes because $\mathbb 1$ lies in the null space of each matrix $M(x)^T$. By density, this shows that $\|e^{t\mathcal A}f_0\|_{L^1}=\|f_0\|_{L^1}$ for all $t\ge 0$ and all positive $L^1$-functions, i.e., $e^{t\mathcal A}$ is stochastic and hence also contractive.
\end{proof}

\begin{rem}\label{rem:linfty}
Our motivating model in~\eqref{eq:system} is based on \textit{one} gene with \textit{two} modes of expression. Analogous models also exist that describe ensembles of genes with three or more modes of expression, leading to $\R^n$-valued functions with $n\ge 3$~\cite{MieSzy13}. More precisely,~\eqref{eq:system} can be generalized to a system of differential equations driven by the operator
\begin{equation}
\mathcal A:f\mapsto -
\frac{\partial}{\partial x} \left[
\diag\left((a_i-b_i \cdot )f_i\right)_{i=1,\ldots,n}
 \right] + Mf,
\end{equation}
on $L^1(\frac{a}{b},\frac{c}{d};\R^n)$, where $M$ is an $L^\infty$-function taking values in the spaces of symmetric $n\times n$-matrices whose rows sum up to 0 and with off-diagonal entries that are a.e.\ bounded below away from 0.
It is easy to see that our generation result extend to this more general setting. We leave the details to the interested reader. {Different but comparable generalizations have been thoroughly discussed in~\cite{BenLebMal15}.}
\end{rem}

{We have seen in Section~\ref{sec:balance-1d} that, under the assumptions formulated in~\eqref{ass1} (cf.\ also the generalization discussed in Remark~\ref{rem:hill}), there is a non-zero stationary state, i.e., our operator $\mathcal A$ has non-trivial null space. More generally, b}y~\cite[Cor.~C.III.4.3]{Nag86} the eigenvalues of $\mathcal A$ that lie on $i\mathbb R$ form an additive cyclic group, i.e., 
\begin{equation}\label{eq:kerA}
\sigma_p(\mathcal A)=i\mathbb R\quad\hbox{or}\quad
\sigma_p(\mathcal A)=ik\mathbb Z\quad \hbox{for some }k\in \mathbb N_0\ .
\end{equation}

Thus, the semigroup  $(e^{t\mathcal A})_{t\ge 0}$ itself may a priori still converge towards a rotation group (i.e., a linear combination of terms $(e^{t\lambda})_{t\ge 0}$, $\lambda \in \sigma_p(\mathcal A)\cap i\mathbb R$) as $t\to 0$, as the subset $\sigma_p(\mathcal A)\cap i\mathbb R$ of the point spectrum of $\mathcal A$ that lies on the imaginary axis may contain non-zero elements. An additional compactness argument is needed to rule out this case, but this turns out to be more delicate than expected. 

First of all, observe that elements in the domain of $\mathcal A$ are $W^{1,1}$, and in particular $L^\infty$, on each compact sub-interval of $(\frac{a}{b},\frac{c}{d})$, hence in particular the resolvent operator $R(\lambda,\mathcal A)$ of $\mathcal A$ maps $L^1(\frac{a}{b},\frac{c}{d};\mathbb R^2)$ to $L^\infty_{loc}(\frac{a}{b},\frac{c}{d};\mathbb R^2)$ for all $\lambda >0$: by~\cite[Cor.~2.4]{Are08}, this implies that $R(\lambda,\mathcal A)$ is a kernel operator for all $\lambda >0$; we already know that it is positive as well. Now, let us recall a notion from the theory of operators on Banach lattices, based on the concept of \emph{AM-spaces}, cf.~\cite[\S~II.7]{Sch74}: a positivity preserving operator on $L^1(\frac{a}{b},\frac{c}{d};\mathbb R^2)$ is said to be \emph{AM-compact} if it maps order intervals
\[
[\eta,\theta]:=\left\{g\in L^1\left(\frac{a}{b},\frac{c}{d};\mathbb R^2\right):\eta\le g(x)\le \theta\hbox{for a.e. } x\in \left(\frac{a}{b},\frac{c}{d}\right)\right\}
\]
into precompact sets of $L^1\left(\frac{a}{b},\frac{c}{d};{\mathbb R^2}\right)$ for all $0\le \theta\in L^1(\frac{a}{b},\frac{c}{d};\mathbb R^2)$. It is known that all positive kernel operators are AM-compact, cf.~\cite[Prop.~A.1]{GerGlu17}.

\begin{lemma}\label{lemma:0eigenv}
The only {possible} eigenvalue of $\mathcal A = \mathcal{A}_1 + \mathcal{A}_2$ along the imaginary axis is $0$.
\end{lemma}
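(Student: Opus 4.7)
The plan is to combine Perron--Frobenius theory with a direct phase analysis of the eigenvalue equation. Suppose that $\mathcal A h = i\alpha h$ for some $\alpha \in \mathbb R$ and a nonzero $h = (h_1, h_2) \in D(\mathcal A)$. Since the semigroup is irreducible, stochastic, and admits the strictly positive stationary eigenfunction $\psi = (\psi_1, \psi_2)$ of Theorem~\ref{thm:pavel-densities} at $s(\mathcal A) = 0$, classical Perron--Frobenius theory for irreducible positive semigroups (in the form of~\cite[Thm.~C.III.3.12]{Nag86}, say) implies that $|h|$ is a scalar multiple of $\psi$. Normalizing, I would write $h_j = u_j \psi_j$ componentwise with unimodular factors $u_j\colon (\tfrac{a}{b}, \tfrac{c}{d}) \to \mathbb C$ satisfying $|u_j| \equiv 1$ a.e.

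The next step is to derive a system of ODEs for $u_1, u_2$. Inserting $h_j = u_j \psi_j$ into $\mathcal A h = i\alpha h$ and using the stationary equation~\eqref{eq:stationary} satisfied by $\psi$, a short computation yields, on the interior $(\tfrac{a}{b}, \tfrac{c}{d})$,
\[
(bx-a)\, u_1' + \mu(x)\, \frac{\psi_2}{\psi_1}\, (u_2 - u_1) = i\alpha\, u_1, \qquad
(dx-c)\, u_2' + \lambda(x)\, \frac{\psi_1}{\psi_2}\, (u_1 - u_2) = i\alpha\, u_2,
\]
with derivatives interpreted in $W^{1,1}_{\mathrm{loc}}(\tfrac{a}{b}, \tfrac{c}{d})$; the quotients $\psi_2/\psi_1$ and $\psi_1/\psi_2$ are well defined, strictly positive, and continuous in the interior by Theorem~\ref{thm:pavel-densities}.

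Now comes the key observation. Multiplying the first equation by $\bar u_1$ and taking real parts, the fact that $|u_1|^2 \equiv 1$ gives $\mathrm{Re}(u_1' \bar u_1) = 0$, and since $\mathrm{Re}(i\alpha) = 0$ the identity reduces to
\[
\mu(x)\, \frac{\psi_2(x)}{\psi_1(x)}\, \bigl(\mathrm{Re}(u_2 \bar u_1) - 1\bigr) = 0 \qquad \text{a.e.}
\]
Positivity of $\mu, \psi_1, \psi_2$ in the interior forces $\mathrm{Re}(u_2 \bar u_1) = 1$ a.e., and since $|u_2 \bar u_1| = 1$ a.e.\ this collapses to $u_1 = u_2 =: u$ a.e.

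Setting $u := u_1 = u_2$, the two equations reduce to $(bx-a)\, u' = i\alpha\, u$ and $(dx-c)\, u' = i\alpha\, u$, whose difference reads $\bigl((b-d)x + (c-a)\bigr)\, u' = 0$. The non-degeneracy hypothesis $\tfrac{a}{b} < \tfrac{c}{d}$ rules out the simultaneous vanishing $b = d$, $a = c$, so the coefficient $(b-d)x + (c-a)$ has at most one zero; hence $u' = 0$ a.e.\ and $u$ is constant. Plugging back, $i\alpha\, u = 0$ with $|u| = 1$ yields $\alpha = 0$, as desired. The delicate step I expect to be the main obstacle is the regularity bookkeeping around the endpoints: since $\psi_1, \psi_2$ may blow up at $\tfrac{a}{b}, \tfrac{c}{d}$, the phase functions $u_j = h_j/\psi_j$ are only guaranteed to be in $W^{1,1}_{\mathrm{loc}}$ in the open interval, so one has to justify carefully that all manipulations can be restricted to interior compacts where $\psi_j$ is bounded below away from $0$ and $h \in D(\mathcal A)$ is $W^{1,1}$.
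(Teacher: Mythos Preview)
Your argument is correct and in fact cleaner than the paper's. Both proofs start identically: Perron--Frobenius for the irreducible stochastic semigroup forces $|h|=\psi$, so $h_j=u_j\psi_j$ with unimodular $u_j$. From there the paper invokes the cyclic structure of the peripheral point spectrum \cite[Thm.~C-III-2.2]{Nag86}, which says that $(\psi_1 e^{in\gamma_1},\psi_2 e^{in\gamma_2})$ is an eigenvector for $in\beta$ for \emph{every} $n\in\Z$; it then writes out the resulting eigenvalue equation with the free parameter $n$, sends $n\to\infty$ to isolate the transport contribution, and deduces $\gamma_1'=\gamma_2'$ only after a second substitution. Your route bypasses this machinery entirely: multiplying the phase equation by $\bar u_1$ and taking real parts kills both the transport term (since $\mathrm{Re}(u_1'\bar u_1)=\tfrac12(|u_1|^2)'=0$) and the right-hand side, immediately forcing $\mathrm{Re}(u_2\bar u_1)=1$ and hence $u_1=u_2$. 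The remaining step---subtracting the two reduced equations to get $((b-d)x+(c-a))u'=0$---is the same in spirit as the paper's endgame. What you gain is that you never need the deeper spectral result on the cyclic peripheral spectrum, only the basic fact $|h|\in\ker\mathcal A$; what the paper's approach buys is perhaps a more systematic template that would adapt to $n\times n$ systems (cf.\ Remark~\ref{linfty}), where your real-part trick would require pairing equations more carefully. Your caution about endpoint regularity is well placed but not an obstacle: all identities are pointwise a.e.\ on the open interval, where $\psi_j>0$ and $h_j,\psi_j\in W^{1,1}_{\mathrm{loc}}$, and $u'=0$ a.e.\ on a connected open set suffices to conclude $u$ is constant.
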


\begin{proof}
{Let the point spectrum of $\mathcal A$ intersect $i\mathbb R$: by~\eqref{eq:kerA}, the null space $\ker \mathcal A$ of $\mathcal A$ is non-trivial.}
By the Theorem of Perron--Frobenius for irreducible semigroups, cf.~\cite[Prop.~VI.3.4]{EngNag06}, there is a unique element of $\ker \mathcal A$ which is strongly positive and has norm $1$; let us denote it by 
$(\psi_1,\psi_2)$. Assume for a contradiction that $(f_1,f_2)$ is a (normalised) eigenvector of $\mathcal{A}$ for an eigenvalue $i\beta$ where $\beta \in \R \setminus \{0\}$. Since $\lambda$ and $\mu$ are continuous, it follows from $(f_1,f_2) \in \ker(i\beta - \mathcal{A})$ that $f_1$ and $f_2$ are $C^1_{loc}$-functions.
		We use a standard argument from Perron--Frobenius theory to show that $|(f_1,f_2)| = (|f_1|,|f_2|) \in \ker \mathcal{A}$: indeed, we have
	\begin{align*}
		|(f_1,f_2)| = |e^{t\mathcal{A}}(f_1,f_2)| \le e^{t\mathcal{A}}|(f_1,f_2)|
	\end{align*}
	for every $t \ge 0$. Since each semigroup operator $e^{t\mathcal{A}}$ is contractive and since the norm on $L^1(\frac{a}{b},\frac{c}{d};\C^2)$ is strictly monotone, it follows that actually $|(f_1,f_2)| = e^{t\mathcal{A}}|(f_1,f_2)|$ for all $t \ge 0$. Hence, $|(f_1,f_2)| \in \ker \mathcal{A}$. From this we conclude that $(|f_1|,|f_2|) = (\psi_1,\psi_2)$. We can therefore find functions $\gamma_1,\gamma_2: (\frac{a}{b},\frac{c}{d}) \to \R$ such that $f_k = \psi_k e^{i\gamma_k}$ for $k=1,2$. Since $e^{i\gamma_k}$ is in $C^1_{loc}$ (as $\psi_1$ and $\psi_2$ are $> 0$ on $(\frac{a}{b},\frac{c}{d})$), we can choose $\gamma_k$ to be in $C^1_{loc}$, too.
	
	Now, it follows from \cite[Thm.~C-III-2.2]{Nag86} that $(\psi_1e^{in\gamma_1}, \psi_2 e^{in\gamma_2}) \in \ker(in\beta - \mathcal{A})$ for all integers $n \in \Z$. Using the definition of $\mathcal{A}$, this yields after a short computation that
	\begin{align}
		\label{eq:eigenvalue-equality}
		\begin{split}
			\beta
			\begin{pmatrix}
				\psi_1 \\ \psi_2
			\end{pmatrix} =
			& -
			\begin{pmatrix}
				\gamma_1'(a-bx)\psi_1 \\ \gamma_2'(c-dx) \psi_2
			\end{pmatrix}
			+ \frac{i}{n}
			\begin{pmatrix}
				\frac{d}{dx}[(a-bx)\psi_2] \\ \frac{d}{dx}[(c-dx) \psi_2]
			\end{pmatrix}
			\\
			& - \frac{i}{n} 
			\begin{pmatrix}
				e^{-in\gamma_1} & 0 \\
				0 & e^{-in\gamma_2}
			\end{pmatrix}
			\begin{pmatrix}
				-\lambda & \mu \\
				\lambda & -\mu
			\end{pmatrix}
			\begin{pmatrix}
				\psi_1 e^{in\gamma_1} \\
				\psi_2 e^{in\gamma_2}
			\end{pmatrix}
		\end{split}
	\end{align}
	for all $n \in \Z$. By letting $n \to \infty$ we thus obtain that
	\begin{align}
		\label{eq:equality-independent-of-n}
		\beta
		\begin{pmatrix}
			\psi_1 \\ \psi_2 
		\end{pmatrix} =
		\begin{pmatrix}
			\gamma_1' (a-bx) \psi_1 \\ \gamma_2' (c-dx)\psi_2
		\end{pmatrix}.
	\end{align}
	Since $\psi_1$ and $\psi_2$ are $> 0$ on $(\frac{a}{b},\frac{c}{d})$, it follows that $\gamma_1' = \frac{\beta}{a-bx}$ and $\gamma_2' = \frac{\beta}{c-dx}$.
	
	On the other hand, by plugging~\eqref{eq:equality-independent-of-n} into~\eqref{eq:eigenvalue-equality} and using that $\mathcal{A}(\psi_1,\psi_2) = 0$, we obtain
	\begin{align*}
		\begin{pmatrix}
			-\lambda & \mu \\
			\lambda & -\mu 
		\end{pmatrix}
		\begin{pmatrix}
			\psi_1 \\ \psi_2
		\end{pmatrix} 
		= 
		\begin{pmatrix}
			e^{-in\gamma_1} & 0 \\
			0 & e^{-in\gamma_2}
		\end{pmatrix}
		\begin{pmatrix}
			-\lambda & \mu \\
			\lambda & -\mu 
		\end{pmatrix}
		\begin{pmatrix}
			\psi_1 e^{in\gamma_1} \\ \psi_2 e^{in\gamma_2}
		\end{pmatrix}.
	\end{align*}
	A short computation thus yields
	\begin{align*}
		\begin{pmatrix}
			\psi_2 \\ \psi_1
		\end{pmatrix}
		=
		\begin{pmatrix}
			\psi_2 e^{in(\gamma_2 - \gamma_1)} \\
			\psi_1 e^{in(\gamma_1 - \gamma_2)}
		\end{pmatrix}
	\end{align*}
	for all $n \in \Z$. Plugging in $n = 1$ and using again that $\psi_1$ and $\psi_2$ are $> 0$ on $(\frac{a}{b},\frac{c}{d})$, we thus conclude that $\gamma_1-\gamma_2$ is constant. Hence, $\gamma_1' = \gamma_2'$ and by using the expressions we derived for $\gamma_1'$ and $\gamma_2'$ above, we thus obtain $\frac{\beta}{a-bx} = \frac{\beta}{c-dx}$.
	
	Now we use the assumption that $\beta \not= 0$: this implies that $a-bx = c-dx$ (for all $x \in (\frac{a}{b},\frac{c}{d})$), and therefore $a = c$ and $b = d$. Hence $\frac{a}{b} = \frac{c}{d}$, which is a contradiction.
\end{proof}

Now, we are finally in the position of stating our main result in this section.
\begin{theo}\label{thm:joch}
The semigroup $(e^{t\mathcal A})_{t\ge 0}$ converges strongly towards the projector onto $\ker \mathcal A$.
\end{theo}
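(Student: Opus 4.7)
The plan is to combine the structural results already established for $\mathcal A$ with a strong-convergence theorem for positive semigroups on $L^1$-spaces in which the usual compactness of the resolvent is replaced by AM-compactness. First I would identify the limiting projector: since $(e^{t\mathcal A})_{t\ge 0}$ is a stochastic, positive, irreducible contraction semigroup (Proposition~\ref{lem:semigracal}), the Perron--Frobenius theorem yields a one-dimensional $\ker \mathcal A$ spanned by the strongly positive density $(\psi_1,\psi_2)$ from Theorem~\ref{thm:pavel-densities}, normalised so that $\|(\psi_1,\psi_2)\|_1 = 1$; stochasticity gives $\mathbb 1 = (1,1)^T \in \ker \mathcal A^*$, so the natural candidate is the positive rank-one projection
\[
Pf := \left(\int_{a/b}^{c/d} \bigl(f_1(x)+f_2(x)\bigr)\,dx\right)\begin{pmatrix}\psi_1\\ \psi_2\end{pmatrix},
\]
which is a contraction onto $\ker \mathcal A$, satisfies $e^{t\mathcal A}P = P = P e^{t\mathcal A}$ for every $t\ge 0$, and provides a decomposition $L^1(\frac{a}{b},\frac{c}{d};\mathbb R^2) = \ker \mathcal A \oplus \ker P$ into $(e^{t\mathcal A})$-invariant subspaces.

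On $\ker \mathcal A$ the semigroup acts as the identity, so it suffices to prove $e^{t\mathcal A}f \to 0$ strongly for every $f \in \ker P$. Stochasticity forces $\ker P$ to be invariant, and the restriction of $\mathcal A$ to $\ker P$ has spectrum $\sigma(\mathcal A) \setminus \{0\}$, which by Lemma~\ref{lemma:0eigenv} misses the imaginary axis entirely. To convert this spectral information into strong convergence I would exploit the two further ingredients recalled just before Lemma~\ref{lemma:0eigenv}: the resolvent $R(\lambda,\mathcal A)$ is a positive kernel operator for every $\lambda > 0$, hence AM-compact, and the orbits under the contraction semigroup are order-bounded and norm-bounded. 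These hypotheses put us in the scope of convergence theorems for positive $C_0$-semigroups on $L^1$ whose resolvent is AM-compact and whose peripheral spectrum reduces to $\{0\}$ (in the framework of Gerlach--Gl\"uck, cf.\ \cite{GerGlu17}): AM-compactness forces the orbits in $\ker P$ to have relatively compact tails, and an Arendt--Batty--Lyubich--Vu-type Tauberian argument adapted to positive semigroups then yields $e^{t\mathcal A}|_{\ker P}\to 0$ strongly.

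The main obstacle is exactly this last step. The classical tools for proving strong stability -- compact resolvent, eventually compact or norm-continuous semigroup, or reflexivity together with weak compactness of orbits -- are all unavailable here: Lemma~\ref{lem:dissip} explicitly rules out compactness of the resolvent, the hyperbolic transport part of $\mathcal A$ prevents the semigroup from being eventually norm-continuous, and $L^1$ is not reflexive. What rescues the argument is the fact that $R(\lambda,\mathcal A)$ is a positive \emph{kernel} operator, which on an $L^1$-space coincides with AM-compactness; this qualitatively weaker notion of compactness, when combined with positivity, irreducibility, and the triviality of the peripheral spectrum proved in Lemma~\ref{lemma:0eigenv}, is just enough to close the Tauberian loop and produce strong convergence. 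Arranging this interplay cleanly is where the new method advertised in the introduction does its real work.
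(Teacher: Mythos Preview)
Your plan uses the same two ingredients as the paper---Lemma~\ref{lemma:0eigenv} and the AM-compactness of $R(\lambda,\mathcal A)$---and reaches the same conclusion, but the organisation differs. You first split off $\ker\mathcal A$ via the explicit rank-one projector and then appeal to an abstract Gerlach--Gl\"uck / ABLV-type theorem to kill the semigroup on the complement; the paper instead proves \emph{directly} that every orbit is relatively compact and then quotes \cite[Thm.~V.2.14]{EngNag00}. The concrete mechanism the paper uses for orbit compactness is worth noting, since this is where AM-compactness actually enters: choosing a strictly positive fixed vector $f_0\in\ker\mathcal A$, the set $\{g:|g|\le cf_0\}$ is dense, and for $f=R(1,\mathcal A)g$ with $|g|\le cf_0$ one has $|e^{t\mathcal A}f|\le e^{t\mathcal A}R(1,\mathcal A)|g|\le cR(1,\mathcal A)f_0$, so the whole orbit sits in $R(1,\mathcal A)[-cf_0,cf_0]$, which is relatively compact by AM-compactness. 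Your version hides this step inside a black-box citation, which is fine provided the cited theorem really applies.

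One imprecision to fix: you write that ``the restriction of $\mathcal A$ to $\ker P$ has spectrum $\sigma(\mathcal A)\setminus\{0\}$, which by Lemma~\ref{lemma:0eigenv} misses the imaginary axis entirely.'' Lemma~\ref{lemma:0eigenv} is a statement about the \emph{point} spectrum only, not the full spectrum, and the classical ABLV theorem needs countability of $\sigma(\mathcal A)\cap i\mathbb R$, which you have not established. This is not fatal---once orbits are relatively compact, the Jacobs--de Leeuw--Glicksberg machinery underlying \cite[Thm.~V.2.14]{EngNag00} requires only that the peripheral \emph{point} spectrum be $\{0\}$---but your write-up should invoke that route rather than ABLV.
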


Our main step in the proof is to check that the semigroup is relatively compact in the strong operator topology. Relative compactness of the orbits merely in the \textit{weak} operator topology
would by~\cite[Cor.~V.4.6]{EngNag00} already imply mean ergodicity of the semigroup: but our result in Theorem~\ref{thm:joch} is significantly stronger, since convergence to equilibrium is actually achieved for individual orbits not only in a time-averaged sense.

\begin{proof}
In view of Lemma~\ref{lemma:0eigenv}, the claim is an immediate consequence of~\cite[Thm.~V.2.14]{EngNag00} if we can prove that the orbits of the semigroup are relatively compact.

To this purpose, let $f_0$ be a  strongly positive vector in the null space of $\mathcal A$ (and hence also a fixed point of $e^{t\mathcal A}$ for all $t\ge 0$), so that in particular $f_0=f_0-{\mathcal A}f_0$, i.e., $R(1,\mathcal A)f_0=f_0$. Introduce the set
\[
L^1_{f_0}:=\left\{L^1\left(\frac{a}{b},\frac{c}{d};{\mathbb R^2}\right):\exists c\in \mathbb R_+:|f|\le c|f_0| \right\}\ ,
\]
which is dense in $L^1\left(\frac{a}{b},\frac{c}{d};{\mathbb R^2}\right)$, thus implying that $R(1,\mathcal A)\left(L^1_{f_0}\right)$ is dense in $R(1,\mathcal A)\left(L^1\left(\frac{a}{b},\frac{c}{d};{\mathbb R^2}\right)\right)=D(\mathcal A)$ and hence in $L^1\left(\frac{a}{b},\frac{c}{d};{\mathbb R^2}\right)$.

Hence, in order to prove relative compactness of the orbits we can take without loss of generality some $f=R(1,\mathcal A)g$ that additionally satisfies $|g|\le cf_0$. Then by using positivity of $e^{t\mathcal A}$ we see that
\[
|e^{t\mathcal A}f|\le e^{t\mathcal A}|f|\le e^{t\mathcal A}R(1,\mathcal A)|g|\le R(1,\mathcal A)ce^{t\mathcal A}f_0 =R(1,\mathcal A)cf_0
\]
for all $t\ge 0$, and therefore
\[
e^{t\mathcal A}f\in \left[-cR(1,\mathcal A)f_0,cR(1,\mathcal A)f_0\right]\subset R(1,\mathcal A)[-cf_0,cf_0]\ \quad \hbox{for all }t\ge 0\ .
\]
All in all, we have proved that the orbits of the semigroup are contained in the image under $R(1,\mathcal A)$ of some order interval: because of AM-compactness of $R(1,\mathcal A)$, these sets are hence relatively compact.
\end{proof}

{
\begin{rem}\label{rem:hill2}
We have deliberately formulated Lemma~\ref{lemma:0eigenv} and Theorem~\ref{thm:joch} in a rather general way: their proof does not depend on the fact that $0$ is an eigenvalue. In view of Remark~\ref{rem:hill}, this can be rephrased by saying that if the conditions~\ref{ass1} are relaxed, e.g.\ be letting $\lambda,\mu$ be Hill-type functions, our system is still well-posed and in fact governed by a semigroup on $L^1(\frac{a}{b},\frac{c}{d};\R^2)$ that converges strongly to 0.
\end{rem}
}

\begin{rem}\label{rem:irred}
A close inspection of the proofs of Proposition~\ref{lem:semigracal} and Lemma~\ref{lemma:0eigenv} shows that Theorem~\ref{thm:joch} holds not only for $\mathcal A_2$ as in~\eqref{eq:a1a2}, but more generally under the assumptions that $\mathcal A_2$  is a bounded linear operator on $L^1(\frac{a}{b},\frac{c}{d};\R^2)$ such that
\begin{itemize}
\item $\mathcal A_2$ generates a positive, irreducible semigroup with ${\mathcal A_2}^T{\mathbb 1}=0$; and
\item ${\mathcal D}{\mathcal A}_2-{\mathcal A}_2{\mathcal D}= \Phi(e^{in\gamma_1},e^{in\gamma_2})\mathcal J$
for some injective bounded linear operator $\mathcal J$ and some function $\Phi:\C^2\to\C^2$ that only vanishes along the diagonal $\{(x,x):x\in \C\}$, where
\[
{\mathcal D}:=\begin{pmatrix}
e^{in\gamma_1} & 0\\ 0 & e^{in\gamma_2}
\end{pmatrix}\ .
\]
(The first conditions is equivalent to requiring the semigroup generated by $\mathcal A_2$ to be stochastic and irreducible.)	
\end{itemize}
\end{rem}

{
Under our standing assumptions~\eqref{ass1} on the switching rates $\lambda, \mu$,} we sum up our findings as follows.

\begin{cor}\label{cor:convergenceafter}
The null space of $\mathcal A$ is spanned by a  strongly positive function $\psi$ and the semigroup generated by $\mathcal A$ on $L^1(\frac{a}{b},\frac{c}{d};\mathbb R^2)$ converges strongly towards the orthogonal projector
\[
f\mapsto \int_{\frac{a}{b}}^\frac{c}{d} \left(f(x)|\phi(x)\right)_{\mathbb R^2}\diff x \cdot \psi\ ,
\]
where $\varphi$ is the  strongly  positive function that spans $\Ker \mathcal A'$ and such that $\int_{\frac{a}{b}}^\frac{c}{d} \left(\psi(x)|\phi(x)\right)_{\mathbb R^2}\diff x=1$.
\end{cor}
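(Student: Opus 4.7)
The plan is to combine Theorem~\ref{thm:joch} with Perron--Frobenius theory for positive $C_0$-semigroups, applied both to $\mathcal A$ and to its dual, so as to express the limit projector from that theorem in its rank-one canonical form.

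I would first show that $\ker \mathcal A$ is spanned by a strongly positive $\psi$. By Proposition~\ref{lem:semigracal} the semigroup $(e^{t\mathcal A})_{t\ge 0}$ is irreducible, positivity preserving and stochastic --- hence contractive --- on $L^1(\frac{a}{b},\frac{c}{d};\R^2)$, so its spectral bound is $s(\mathcal A)=0$. The Perron--Frobenius theorem for irreducible positive semigroups (e.g.\ \cite[Prop.~VI.3.5]{EngNag00}) then asserts that $0$ is a geometrically simple eigenvalue whose eigenspace is spanned by a strongly positive function; call this generator $\psi$, normalized to $\|\psi\|_1=1$. Its explicit form is the one computed in Theorem~\ref{thm:pavel-densities}.

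Next I would turn to the dual side. Theorem~\ref{thm:joch} gives strong convergence of $(e^{t\mathcal A})_{t\ge 0}$ to the spectral projector $P$ at $0$, which therefore has rank equal to $\dim \ker \mathcal A=1$, and satisfies $\mathcal A P=P\mathcal A=0$. The adjoint $P'$ on $L^\infty(\frac{a}{b},\frac{c}{d};\R^2)$ is consequently of rank one as well, and its range coincides with $\ker \mathcal A'$. Stochasticity of the semigroup translates into $e^{t\mathcal A'}\mathbb 1=\mathbb 1$ for all $t\ge 0$, so $\mathbb 1\in\ker\mathcal A'$; being strongly positive as an element of $L^\infty$, $\mathbb 1$ is a valid choice for the spanning vector $\phi$ of the one-dimensional subspace $\ker\mathcal A'$.

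Finally, any rank-one projector with range $\mathrm{span}(\psi)$ has the form $Pf=T(f)\,\psi$ for some $T\in(L^1)'\cong L^\infty(\frac{a}{b},\frac{c}{d};\R^2)$. Because $P$ is the spectral projector at the simple eigenvalue $0$, one has $T\circ\mathcal A=0$, i.e.\ $T\in\ker\mathcal A'$; together with $P\psi=\psi$, i.e.\ $T(\psi)=1$, this forces $T$ to be the unique scalar multiple of $\phi$ satisfying the normalization $\int_{a/b}^{c/d}(\psi(x)|\phi(x))_{\R^2}\,dx=1$, namely $T(f)=\int_{a/b}^{c/d}(f(x)|\phi(x))_{\R^2}\,dx$. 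Combining this representation with the convergence asserted in Theorem~\ref{thm:joch} yields the corollary. The one delicate point is that $0$ be algebraically (not merely geometrically) simple, equivalently a first-order pole of the resolvent, so that the spectral projector is indeed of rank one; this is in fact a byproduct of Theorem~\ref{thm:joch}, since any nontrivial Jordan block at $0$ would produce polynomial-in-$t$ terms in the orbits of the semigroup that are incompatible with strong convergence.
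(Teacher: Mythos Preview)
Your proposal is correct and matches the paper's intended argument: the corollary is presented there without proof, as a summary (``We sum up our findings as follows'') of Proposition~\ref{lem:semigracal}, Lemma~\ref{lemma:0eigenv}, and Theorem~\ref{thm:joch} together with the Perron--Frobenius input already invoked in the proof of Lemma~\ref{lemma:0eigenv}. Your explicit identification of $\phi$ with (a multiple of) $\mathbb 1$ via stochasticity, and your remark that algebraic simplicity of $0$ is forced by the strong convergence in Theorem~\ref{thm:joch}, make precise what the paper leaves implicit.
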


\begin{rem}
One may expect that the long-time behavior of~\eqref{eq:system} can be investigated by applying classical results in the theory of positivity preserving operators like~\cite[Exer.~II.4.21.(2)]{EngNag00}, \cite[Thm.~VI.3.5]{EngNag06} or \cite[Thm.~12]{Dav05}. However, we have not been able to prove that the semigroup that governs our problem is
\begin{itemize}
\item \emph{eventually norm continuous} 
(cf.~\cite[Def.~II.4.17]{EngNag00}),
\item or  \emph{quasi-compact} (cf.~\cite[Def.~V.4.4]{EngNag06}),
\item or it has the \emph{Feller property} (i.e., the solutions to~\eqref{eq:system} are continuous for all $t>0$ and each initial data in $L^1$);
\end{itemize}
in fact, we doubt that these properties hold at all. We also observe that if the semigroup could be proved to satisfy $\inf\{e^{t\mathcal A},e^{s\mathcal A}\}>0$ for some $t>s\ge 0$, then by~\cite[Cor.~C-IV.2.10]{Nag86} the convergence stated in Corollary~\ref{cor:convergenceafter} would hold in operator norm, too.
\end{rem}

Needless to say, the function $\psi$ in Corollary~\ref{cor:convergenceafter} is nothing but the  function $\psi$ obtained in Theorem~\ref{thm:pavel-densities}: this stationary solution is uniquely determined by the initial data $f_0$ and the parameters $\lambda,\mu$.
We will present more explicit formulae for the  strongly positive function $\psi$ in the following section, for special choices of $\lambda,\mu$.

\section{Analytical solutions and numerical examples} \label{SecLast}

Let us discuss the simplest case: $\lambda $ and $ \mu $ are assumed to be affine functions, i.e.,
\begin{equation}
\begin{array}{ccc}
\displaystyle  \lambda (x) & = & \displaystyle  lx+k\ , \\
 \displaystyle  \mu (x) & = & \displaystyle m x +n\ ,
 \end{array}
 \end{equation}
for some $k,l,m,n\in \mathbb R$ chosen arbitrarily subject to guarantee positivity of $ \lambda$ and $ \mu$: then all integrals in~\eqref{solution1} can be explicitly calculated.

We then have
$$
 \frac{l x + k}{bx-a}  + \frac{mx + n}{dx-c}  = \frac{l}{b} + \frac{l}{b} \frac{\frac{a}{b} + k/l}{x-\frac{a}{b}} 
 + \frac{m}{d} + \frac{m}{d} \frac{\frac{c}{d} + n/m}{x-\frac{c}{d}} $$
and we get
\begin{equation}
h(x) = K e^{\left(\frac{l}{b} + \frac{m}{d} \right) x} (x-\frac{a}{b})^{\frac{al}{b^2} + \frac{k}{b}} (\frac{c}{d}-x)^{\frac{cm}{d^2} + \frac{n}{d}}.
\end{equation}
The solution to the system~\eqref{eq:stationary} is given by
\begin{equation}
\begin{array}{ccl}
\psi_1 (x) & = & \displaystyle  \frac{K}{b} e^{\left(\frac{l}{b} + \frac{m}{d} \right) x}  \Big( x-\frac{a}{b} \Big)^{\frac{al}{b^2} + \frac{k}{b}-1} \Big(\frac{c}{d}-x \Big)^{\frac{cm}{d^2} + \frac{n}{d}}, \\[5mm]
\psi_2 (x) & = &  \displaystyle  \frac{K}{d} e^{\left(\frac{l}{b} + \frac{m}{d} \right) x} \Big(x-\frac{a}{b} \Big)^{\frac{al}{b^2} + \frac{k}{b}} \Big(\frac{c}{d}-x \Big)^{\frac{cm}{d^2} + \frac{n}{d}-1}. \end{array}
\end{equation}

We see that the functions satisfy the boundary conditions~\eqref{eq:balance}, since
\begin{equation} \frac{cm}{d^2} + \frac{n}{d} = \frac{1}{d} \mu \Big(\frac{c}{d}\Big)   > 0 \; \; \; \mbox{and} \; \; \; \frac{al}{b^2} + \frac{k}{b} = \frac{1}{b} \lambda \Big(\frac{a}{b} \Big)  > 0 . 
\end{equation}

We have thus obtained an explicit, analytic solution to the stationary differential equation~\ref{eq:stationary}. This allows us to analyze the behavior
of the solutions depending on the values of the parameters:
\begin{itemize}
\item if $ al + kb < b^2 $, then $ \psi_1 $ is singular at $ x = \frac{a}{b},$
\item if $ al + kb = b^2 $, then $ \psi_1 $ attains a nonzero value at $ x = \frac{a}{b}, $
\item if $ al + kb > b^2 $, then $ \psi_1 $ tends to zero at $ x = \frac{a}{b}; $ \\
\item if $ cm + nd < d^2 $, then $ \psi_2 $ is singular at $ x = \frac{c}{d}, $
\item if $ cm + nd = d^2 $, then $ \psi_2 $ attains a nonzero value at $ x = \frac{c}{d},$
\item if $ cm + nd > d^2 $, then $ \psi_2 $  tends to zero at $ x = \frac{c}{d}. $
\end{itemize}

Let us check where the maxima of $\psi_1,\psi_2$ are situated in the case 
where these functions are not singular at one of the endpoints. We calculate first the
derivative of $ \psi_1 $
\begin{equation}\label{f11}
\begin{split}
\psi_1'(x)&=\frac{K}{b}e^{\left(\frac{l}{b}+\frac{m}{d}\right)x}
\left(x-\frac{a}{b} \right)^{\left(\frac{al}{b^2}+\frac{k}{b}-2\right)}
\left(\frac{c}{d} -x \right)^{\left(\frac{cm}{d^2}+\frac{n}{d}-1\right)}\cdot\\
&\cdot 
\underbrace{\left[\left(\frac{l}{b}+\frac{m}{d}\right)\left(x-\frac{a}{b}\right)\left(\frac{c}{d}-x\right)-\left(x-\frac{a}{b}\right)\left(\frac{cm}{d^2}+\frac{n}{d}\right)+\left(\frac{c}{d}-x\right)\left(\frac{al}{b^2}+\frac{k}{b}-1\right) \right]}_{
\displaystyle := P_1(x)}\\
&=:\frac{K}{b}e^{\left(\frac{l}{b}+\frac{m}{d}\right)x}\left(x-\frac{a}{b} \right)^{\left(\frac{al}{b^2}+\frac{k}{b}-2\right)}
\left(\frac{c}{d} -x \right)^{\left(\frac{cm}{d^2}+\frac{n}{d}-1\right)}\cdot P_1(x)\ ,
\end{split}
\end{equation}
where $ P_1(x) $ is a quadratic polynomial. 
In the general position case
$$ al + kb > b^2 \; \; \mbox{and} \; \; cm + nd \geq d^2\   $$
the function $ \psi_1 $  is a non-negative function 
vanishing in the boundary points $\frac{a}{b},\frac{c}{d}$, hence it must have an odd number of local maxima 
in the open interval $(\frac{a}{b},\frac{c}{d})$. But its critical points
 come from the zeroes of the quadratic polynomials $ P_1 (x),$
 having at most two zeroes. Hence the function $ \psi_1 $ has exactly one local maximum inside the interval.
 This maximum is necessarily global.
 
 In the border case 
 $$ al + kb = b^2 \; \; \mbox{and} \; \; cm + nd \geq d^2\   $$
 the polynomial takes the form
 $$ 
\begin{array}{ccl}
P_1(x) & = &  \left(\frac{l}{b}+\frac{m}{d}\right)\left(x-\frac{a}{b}\right)\left(\frac{c}{d}-x\right)-\left(x-\frac{a}{b}\right)\left(\frac{cm}{d^2}+\frac{n}{d}\right)+\left(\frac{c}{d}-x\right) \underbrace{\left(\frac{al}{b^2}+\frac{k}{b}-1\right)}_{= 0}\\
& = &  \left[ \left(\frac{l}{b}+\frac{m}{d}\right) \left(\frac{c}{d}-x\right)-\left(\frac{cm}{d^2}+\frac{n}{d}\right) \right] \left(x-\frac{a}{b}\right).
\end{array}$$
One of the roots coincides with the endpoint $ x = \frac{a}{b} ,$ hence $ \psi_1 $ has at most one critical point
inside the interval. This point cannot be minimum point, hence the function $ \psi_1 $ has
exactly one maximum inside the semi-closed interval $ [\frac{a}{b}, \frac{c}{d} ). $
The maximum is global and may coincide with the left endpoint $ \frac{a}{b}. $ This occurs
if
$$ \left(\frac{l}{b}+\frac{m}{d}\right) \left(\frac{c}{d}-\frac{a}{b} \right)-\left(\frac{cm}{d^2}+\frac{n}{d}\right)  = \frac{lc}{bd}- \frac{la}{b^2} - \frac{ma}{bd}-\frac{n}{d} \leq 0  $$

Similar analysis can be applied for the function $ \psi_2 $
$$ \psi_2' (x) =   \frac{K}{d} e^{\left(\frac{l}{b} + \frac{m}{d} \right) x} \Big(x-\frac{a}{b} \Big)^{\frac{al}{b^2} + \frac{k}{b}-1 } \Big(\frac{c}{d}-x \Big)^{\frac{cm}{d^2} + \frac{n}{d}-2} P_2 (x), $$
where $ P_2 $ is quadratic polynomial as well. In the general position case
$$ al + kb \geq b^2 \; \; \mbox{and} \; \; cm + nd > d^2\   $$
there is precisely one maximum inside the open interval. In the border case
$$ al + kb \geq  b^2 \; \; \mbox{and} \; \; cm + nd = d^2 , $$
there is one maximum in the semi-closed interval $ (\frac{a}{b}, \frac{c}{d} ]
, $ Note that the maximum may coincide with the right endpoint.

Let us turn to concrete examples  of one-fluid systems. We compared 
   our analytic solution to numerical 
simulations for several examples (some use parameters which are biologically not meaningful). 
For each example we provide the values
of the parameters, analytic formula for the solutions and plot the results.
Since the numerical and analytical solutions are indistinguishable, we show only a single plot 
for each example. 
Moreover, we provide the values of the parameters, formulas for the normalized solutions and plots of these
functions. Each time the functions $\psi_1$ and $\psi_2$ are plotted in green and blue, respectively, while their sum -- the target probability distribution of the system -- is plotted in red.
All computations were done using Matlab\textsuperscript{\tiny\textregistered}.

\noindent \underline{\bf Example 1} 
\newline
Parameter values:
$$ a = -1, \; b = d = c = 1, \; \lambda \equiv 1,\; \mu \equiv 1 $$
Normalized solution:
$$ \left\{
\begin{array}{ccc}
\psi_1 (x) &  = & (1-x)/4, \\[2mm]
\psi_2(x) & = & (x+1)/4, \\[2mm]
\psi_1(x) + \psi_2(x) & = & 1/2.
\end{array} \right. $$

\begin{figure}[htb]
\includegraphics[height=2in]{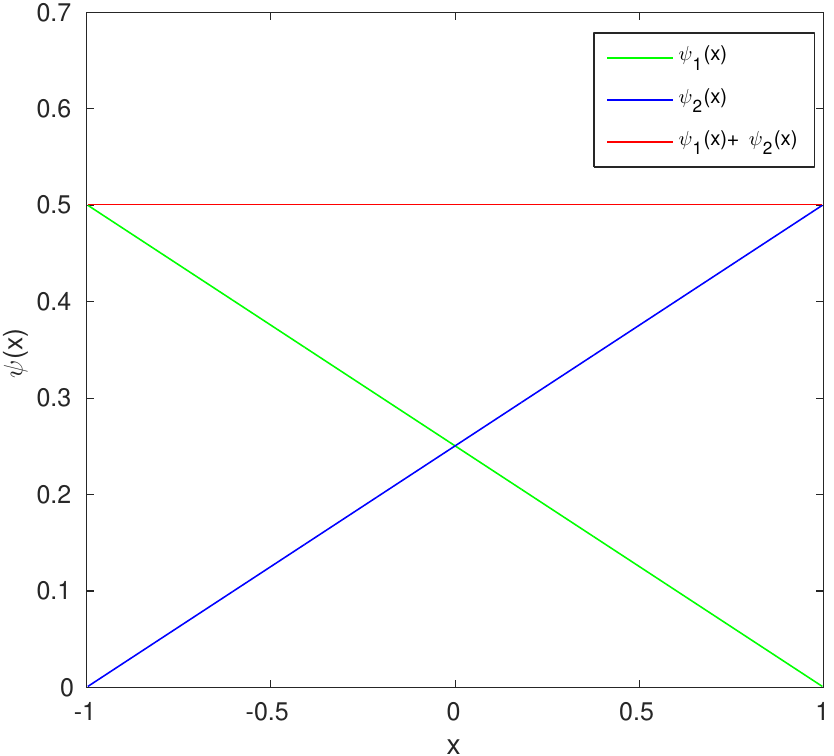}
\caption{Solution of Example 1.}

\label{Ffig1}
\end{figure}

\noindent \underline{\bf Example 2} 
\newline
Parameter values:
$$ a = -1, \;  b = d = c = 1, \; \lambda \equiv 2,\; \mu \equiv 2 $$
Normalized solution:
$$ \left\{
\begin{array}{ccl}
\psi_1 (x) &  = & 3 (x+1)^2 (1-x)/8 \\[2mm]
\psi_2(x) & = & 3 (x+1)(1-x)^2/8 \\[2mm]
\psi_1(x) + \psi_2(x) & = &  3 (1-x^2)/4
\end{array} \right. $$

\begin{figure}[htb]
\includegraphics[height=2in]{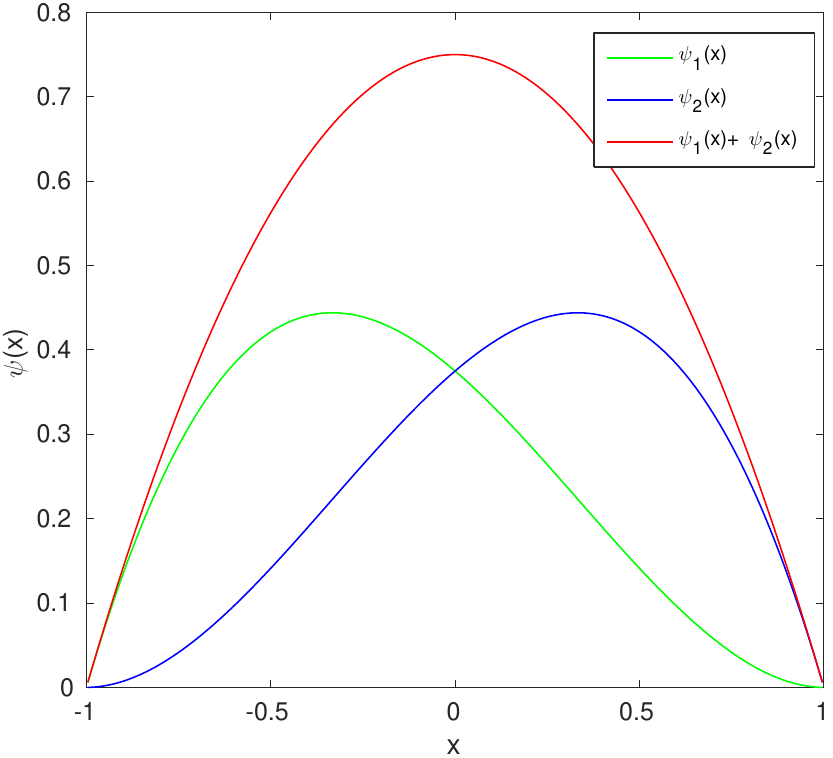}
\caption{Solution of Example 2.}

\label{Ffig2}
\end{figure}

\noindent \underline{\bf Example 3} 
\newline
Parameter values:
$$ a = -1, \;  b = d = c = 1, \; \lambda \equiv 1/2,\; \mu \equiv 1/2 $$
Normalized solution 
$$ \left\{
\begin{array}{ccl}
\psi_1 (x) &  = &  \displaystyle \frac{1}{2 \pi}  \frac{\sqrt{1-x}}{\sqrt{x+1}} \\[2mm]
\psi_2(x) & = &  \displaystyle \frac{1}{2 \pi}  \frac{\sqrt{x+1}}{\sqrt{1-x}} \\[2mm]
\psi_1(x) + \psi_2(x) & = & \displaystyle \frac{1}{\pi}  \frac{1}{\sqrt{1-x^2}} 
\end{array} \right. $$

\begin{figure}[htb]
\includegraphics[height=2in]{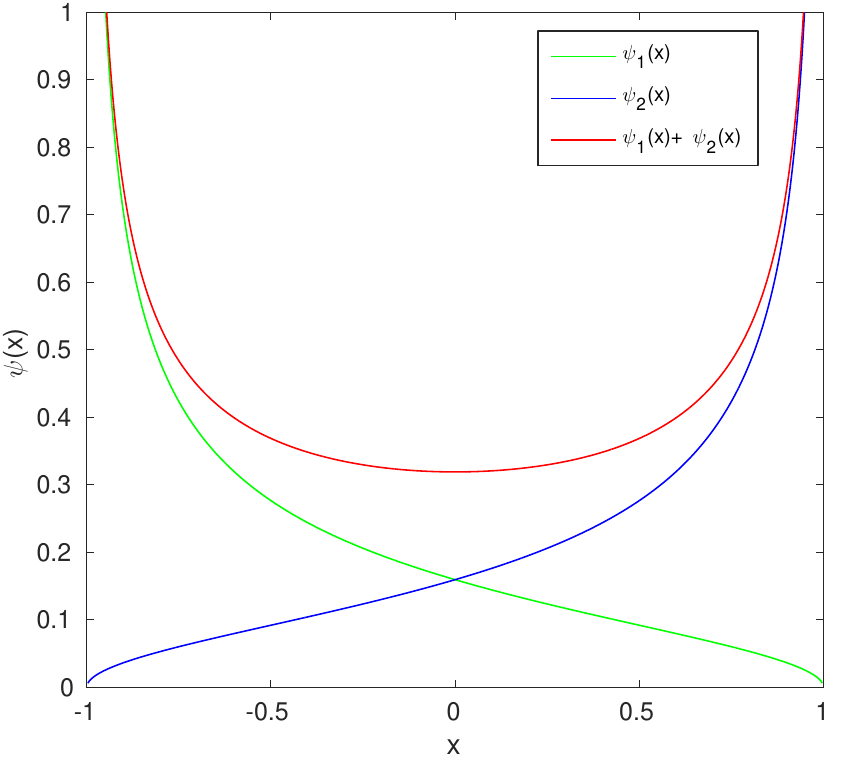}
\caption{Solution of Example 3.}

\label{Ffig3}
\end{figure}

\bigskip
Let us now turn to the case of affine functions $\lambda(x)=lx+k$, $\mu(x)=mx+n$.

\noindent \underline{\bf Example 4} 
\newline
Parameter values:
$$ a = -1, \;  b = d = c = 1, \; l = 0, k= 1, \; m = -1, n=3$$

Normalized solution
$$ \left\{
\begin{array}{ccl}
\psi_1 (x) &  = &  \displaystyle \frac{e}{2(e^2+1)} e^{-x} (1-x)^2 \\[2mm]
\psi_2(x) & = &  \displaystyle \frac{e}{2(e^2+1)}  e^{-x} (1-x^2)  \\[2mm]
\psi_1(x) + \psi_2(x) & = & \displaystyle \frac{e}{e^2+1} e^{-x} (1-x)
\end{array} \right. $$


\begin{figure}[htb]
\includegraphics[height=2in]{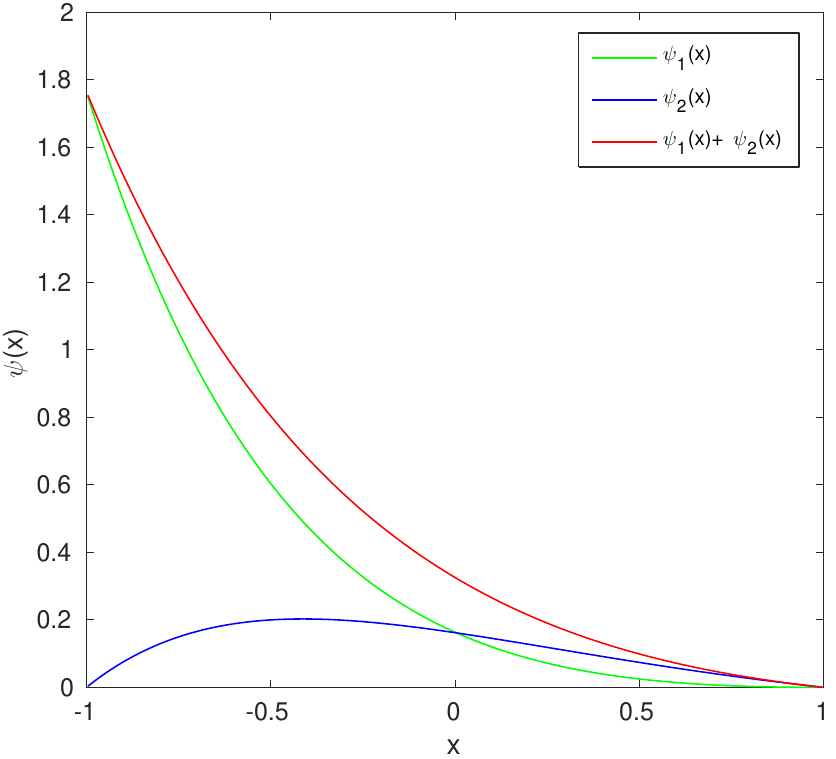}
\caption{Solution of Example 4.}

\label{Ffig4}
\end{figure}

\noindent \underline{\bf Example 5} 
\newline
Parameter values:
$$ a = 0, \;  b = 1,\; c = 2,\;  d =  1, \; l = -2, k= 4, \; m = 1, n=0$$
Normalized solution
$$ \left\{
\begin{array}{ccl}
\psi_1 (x) &  = &  \displaystyle \frac{e^2}{8(23 -3 e^2)} e^{-x} x^{3} (2-x)^{2} \\[2mm]
\psi_2(x) & = &  \displaystyle  \frac{e^2}{8(23 -3 e^2)}  e^{-x} x^4 (2-x) \\[2mm]
\psi_1(x) + \psi_2(x) & = & \displaystyle \frac{e^2}{4(23 -3 e^2)} e^{-x} x^3 (2-x)
\end{array} \right. $$


\begin{figure}[!htb]
\includegraphics[height=2in]{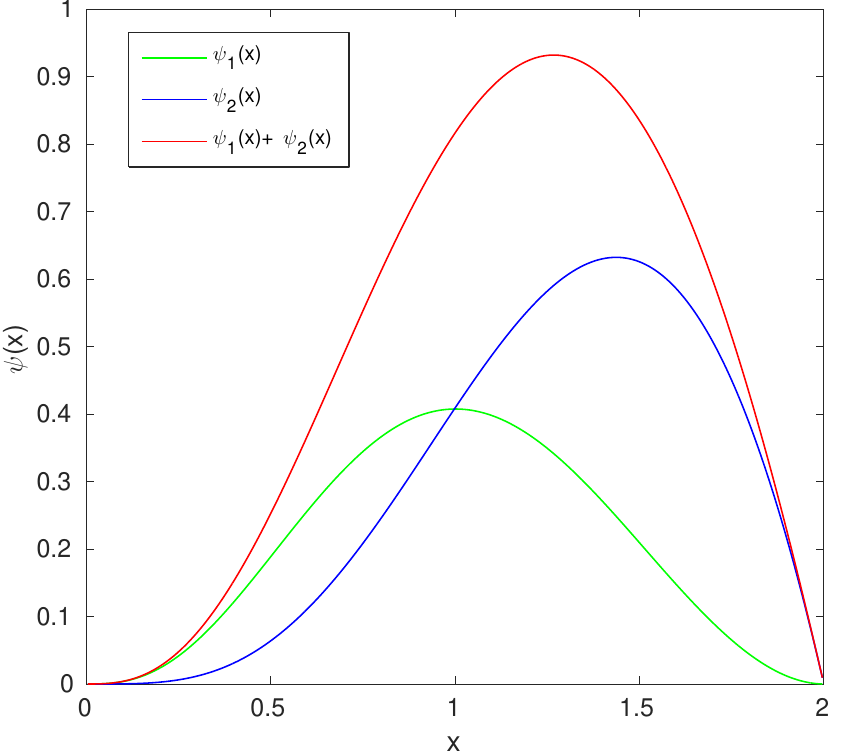}
\caption{Solution of Example 5.}

\label{Ffig5}
\end{figure}

\noindent \underline{\bf Example 6} 
\newline
Parameter values:
$$ a = -1, \;  b = c =  d =  1, \; l = 0, k= 1, \; m = 2, n=1$$
Normalized solution
$$ \left\{
\begin{array}{ccl}
\psi_1 (x) &  = &  \displaystyle (-6 \cosh 2 + 7 \sinh 2)^{-1} e^{2 x}  (1 - x)^3 \\[2mm]
\psi_2(x) & = &  \displaystyle (-6 \cosh 2 + 7 \sinh 2)^{-1} e^{2 x}(x + 1) (1 - x)^2 \\[2mm]
\psi_1(x) + \psi_2(x) & = & \displaystyle 
(-6 \cosh 2 + 7 \sinh 2)^{-1} ({2e^{2 x}  (1 - x)^2})
\end{array} \right. $$

\begin{figure}[!htb]
\includegraphics[height=2in]{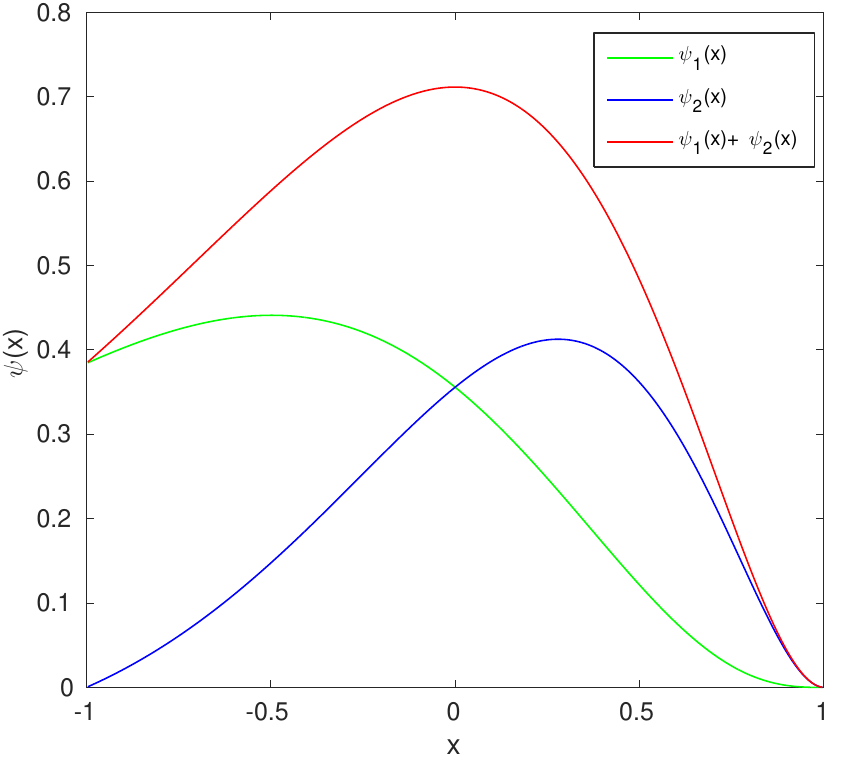}
\caption{Solution of Example 6.}

\label{Ffig6}
\end{figure}

\noindent \underline{\bf Example 7} 
\newline
Parameter values:
$$ a = -1, \;  b = c =  d =  1, \; l = 0, k= 1, \; m = 1, n=2$$
Normalized solution
$$ \left\{
\begin{array}{ccl}
\psi_1 (x) &  = &  \displaystyle \frac{8 e^2}{19 - 304 e + 48 e^3 + e^4}
 e^{x}  (1 - x)^3 \\[2mm]
\psi_2(x) & = &  \displaystyle  \frac{8 e^2}{19 - 304 e + 48 e^3 + e^4}
 e^{x}(x + 1) (1 - x)^2 \\[2mm]
\psi_1(x) + \psi_2(x) & = & \displaystyle 
 \frac{8 e^2}{19 - 304 e + 48 e^3 + e^4} ({2e^{x}  (1 - x)^2})
\end{array} \right. $$

\begin{figure}[htb]
\includegraphics[height=2in]{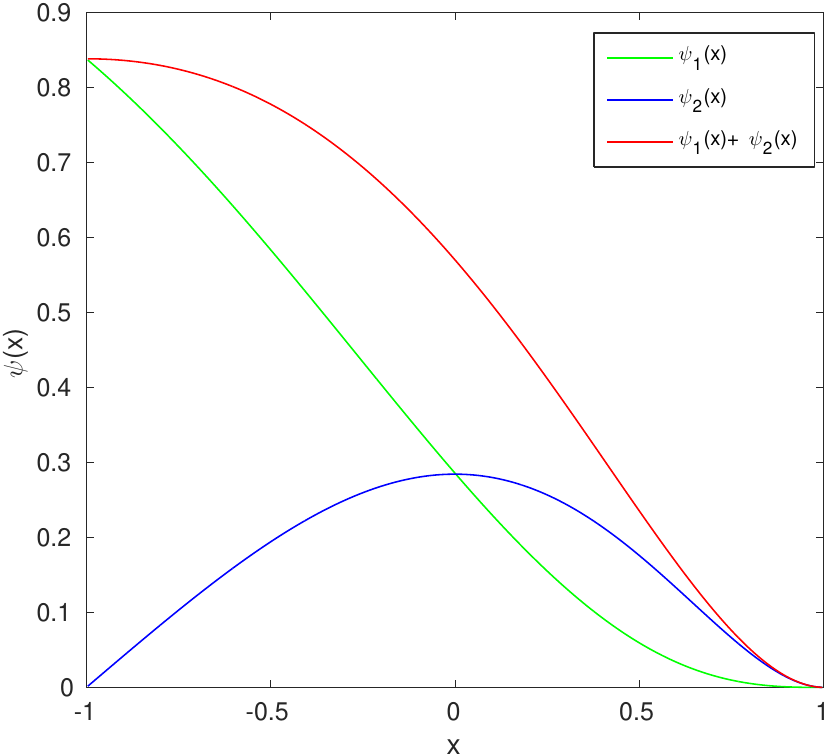}
\caption{Solution of Example 7.}

\label{Ffig7}
\end{figure}

In all considered examples the functions $ \lambda $ and $ \mu $ were assumed to be
affine for simplicity. Even this choice provided us with a rather rich class of models. Note that
any other simple analytic expression will do the job. One should only assure that the
integrals in~\eqref{solution1} can be calculated analytically. The last two examples were chosen to 
illustrate the power of analytic calculations: in both cases $ \psi_1 $ does not approach zero and
is not singular at the left endpoint. In one case it has a maximum inside the interval, in the other case
it is monotonically decreasing. It was easy to find parameters leading to such behavior since analytic
formulas were available, this would be a challenging task if only computer simulations were
available.

\section{Conclusion}

We considered a hybrid model of a self-regulating gene, which is a common motif in gene regulatory networks. Our model describes the evolution of the discrete random state (mode) of the gene (``on" or ``off") and the corresponding continuous protein concentration. The latter evolves according to an ordinary differential equation and leads to a system of PDEs for the evolution of two probability densities (one for each mode). Assuming that the rate functions $\lambda$ and $\mu$ for mode changing are known explicitly, we analyzed the properties of the PDE system and studied well-posedness in an $L^1$-setting. Exploiting the theory of positive operator semigroups we  rigorously proved convergence towards stationary solutions in strong operator topology and derived an analytic expression for such stationary densities. 
Our solution is valid for a large class of protein production and degradation rates and structurally much simpler and easier to evaluate than the solution of the corresponding fully discrete master equation model given by Grima et al. \cite{GriSchNew12}. 
As future work, we plan to investigate the extension of this gene feedback loop two or more interacting gene and their corresponding proteins such as the exclusive or toggle switch \cite{lipshtat2006genetic,loinger2007stochastic}. In these cases, the support of the stationary solution of the corresponding hybrid model has a more complex shape. For instance, in the case of two genes and three modes (as one mode is not reachable), the density is only non-zero within a triangle whose endpoints are determined by the mode-conditional equilibria of the two protein concentrations. Although this is straightforward to see from Monte-Carlo simulations of the model, proving this and other properties for the corresponding PDE system is challenging.

\section{Acknowledgements}  This work is the outcome of a collaboration between two mathematicians working on evolutionary
equations (DM) and partial differential equations (PK) and a computer biologist (VW). It has partially been carried out at the ZiF (Center for Interdisciplinary Research) in Bielefeld  in the framework of the Cooperation Group {\it Discrete and continuous models in the theory of networks}. The authors are indebted to the ZiF for financial support and hospitality.
The work of PK was also partially supported by the Swedish Research Council (Grant D0497301).
 The work of DM was partially supported by the German Research Foundation (Grant 397230547). The work of VW was partially supported by the German Research Foundation (Grant 391984329). We warmly thank Jochen Glück (Ulm) for suggesting to us the proofs of Lemma~\ref{lemma:0eigenv} and Theorem~\ref{thm:joch} and for many helpful discussions. {We also thank the anonymous referees for pointing us to relevant references. In particular, we have learned that some of our analytic formulae for stationary distributions have already been obtained in~\cite{faggionato2009non,zeiser2010autocatalytic}.}

\end{document}